\theoremstyle{plain}
\newtheorem{theorem}{Theorem}[section]
\newtheorem{lemma}[theorem]{Lemma}
\newtheorem{corollary}[theorem]{Corollary}
\theoremstyle{definition}
\newtheorem{definition}[theorem]{Definition}
\newtheorem{problem}{Problem}[section]
\theoremstyle{remark}
\numberwithin{equation}{section}
\newcommand{\bC}{\mathbb{C}}
\newcommand{\bR}{\mathbb{R}}
\newcommand{\bT}{\mathbb{T}}
\newcommand{\bD}{\mathbb{D}}
\newcommand{\cH}{\mathcal{H}}
\newcommand{\oa}{\left\lbrace}
\newcommand{\fa}{\right\rbrace}
\newcommand{\op}{\left(}
\newcommand{\fp}{\right)}
\newcommand{\ra}{\rightarrow}
\DeclareMathOperator{\hol}{Hol}
\begin{document}

\date{\today}
\title[Power-series summability methods in de Branges--Rovnyak spaces]{Power-series summability methods in \\de Branges--Rovnyak spaces}

\author{Javad Mashreghi}
\address{D\'epartement de math\'ematiques et de statistique, Universit\'e Laval,
Qu\'ebec City (Qu\'ebec),  Canada G1V 0A6.}
\email{javad.mashreghi@mat.ulaval.ca}

\author{Pierre-Olivier Paris\'e}
\address{D\'epartement de math\'ematiques et de statistique, Universit\'e Laval,
Qu\'ebec City (Qu\'ebec),  Canada G1V 0A6.}

\curraddr{Department of Mathematics, University of Hawaii at Manoa,
Honolulu, Hawaii 96822, U.S.A.}
\email{parisepo@hawaii.edu}

\author{Thomas Ransford}
\address{D\'epartement de math\'ematiques et de statistique, Universit\'e Laval,
Qu\'ebec City (Qu\'ebec),  Canada G1V 0A6.}
\email{thomas.ransford@mat.ulaval.ca}

\thanks{JM supported by an NSERC Discovery Grant. 
POP supported by an NSERC Alexander-Graham-Bell Scholarship and a scholarship from FRQNT.
TR supported by grants from NSERC and the Canada Research Chairs program.
}

\begin{abstract}
We show that there exists a de Branges--Rovnyak space $\mathcal{H}(b)$ on the unit disk containing a function $f$ with the following property:
even though $f$ can be approximated by polynomials in $\mathcal{H}(b)$, neither the Taylor partial sums of $f$ nor their Ces\`aro, Abel, Borel or logarithmic means converge to $f$ in $\mathcal{H}(b)$.

A key tool is a new abstract result showing that, if one regular summability  method includes another for scalar sequences, then it automatically does so  for certain Banach-space-valued sequences too.

\end{abstract}

\subjclass{Primary 40C15; Secondary 40G10, 41A10, 46E20, 40J05} 

\keywords{De Branges--Rovnyak spaces, polynomial approximations, sum\-mability methods, logarithmic means, Abel means, Borel means}

\maketitle

\section{Introduction}\label{S:Intro}
We denote by $\bD$ the open unit disk and $\hol (\bD )$ the space of holomorphic functions on $\bD$. Polynomial approximation in Banach spaces of holomorphic functions on the unit disk has attracted attention recently. In \cite{Mashreghi2019}, the authors proved that, for a Banach space of holomorphic functions $X$ on the open unit disk having the approximation property and containing a dense set of polynomials, there exist linear bounded operators $T_n : X \ra X$ such that, for each $f \in X$, the functions $T_n(f)$ are polynomials and $T_n(f) \ra f$ in the norm of $X$. The authors called the aforementioned sequence $(T_n)$ a \textit{linear polynomial approximation scheme}.

For some spaces, the operators $T_n$ are explicitly known. For example, if $X = H^2$, the Hardy space, then the bounded linear operators can be chosen to be the $n$-th partial sums $s_n[f]$ of the Taylor expansion of a function $f (z) = \sum_{n \geq 0} a_n z^n$ belonging to $H^2$. This is an easy consequence of the definition of the norm in $H^2$. When $X = H^p$, the Hardy spaces with $1 < p < \infty$, we can still choose the partial sums $s_n[f]$ of a function $f \in H^p$ as a linear polynomial approximation scheme. The proof is more elaborate and is based on a result of Riesz on the boundedness of the Hilbert transform. The reader is referred to \cite[p.~108]{Jevic2016}. When $X = A (\bD )$, the disk algebra, due to a slight variant of du Bois-Reymond's theorem that establishes the existence of a continuous function on the unit circle whose Fourier series diverges at one point, the partial sums $s_n[f]$ do not converge to $f$ in the norm of $A (\bD )$. Instead, we can use the Ces\`aro means of order 1 of the Taylor expansion of a function $f \in A (\bD )$, defined as
	$$
	\sigma_n [f] := \frac{1}{n + 1} \sum_{k = 0}^n s_k[f] .
	$$
This is essentially Fej\'er's theorem. This procedure also works for certain other spaces, for example the Hardy space $H^1$ and the weighted Dirichlet spaces $\mathcal{D}_\omega$ for superharmonic weights $\omega : \bD \ra (0, \infty )$.

However there are some spaces for which we do not explicitly know the linear polynomial approximation scheme. One such family of spaces are the de Branges--Rovnyak spaces $\cH (b)$, where $b \in H^\infty$ is a non-extreme point of the unit ball of $H^\infty$. Despite the fact that the set of polynomials is dense in $\cH (b)$, the authors of \cite{El-Fallah2016} showed that, for certain choices of $b$, the partial sums $s_n[f]$ and the Ces\`aro means $\sigma_n[f]$ may fail to converge in $\cH (b)$ to the initial function $f$. Therefore the attention turned to other linear summability methods that do not give a polynomial approximation, but have a better chance to approximate the function $f$ in the norm of $\cH (b)$, namely the dilates of $f$, which are in fact the Abel means of the partial sums $s_n[f]$ : 
	$$
	f_r (z) := \sum_{n \geq 0} a_n r^n z^n = (1 - r) \sum_{n \geq 0} s_n[f] (z) r^n \quad (r \in [0, 1), \, z \in \bD )
	$$
for $f(z) = \sum_{n \geq 0} a_n z^n$. Nevertheless, they showed that even this summability method, which includes all the Ces\`aro summability methods of order $\alpha > -1$, may fail for de Branges--Rovnyak spaces with non-extreme symbols $b$. Furthermore, the same authors showed in the same article that there is a constructive way to obtain a polynomial approximation of a given function $f \in \mathcal{H} (b)$ for any non-extreme point $b$ of the unit ball of $H^{\infty}$. However, the procedure is highly non-linear and it does not correspond to a polynomial approximation scheme.

In this article, we study another summability method which includes the Abel method : the logarithmic method. Its means are defined as
	\begin{equation}
	L_r[f] (z) := \frac{r}{\log \op \frac{1}{1 - r} \fp} \sum_{n \geq 0} \frac{1}{n + 1} s_n[f] (z) r^n
	\end{equation}
for $f(z) = \sum_{n \geq 0} a_n z^n$ and for $r \in [0, 1)$, $z \in \bD$. It was introduced by Borwein in \cite{Borwein1957a} through a power-series method. 

If $f \in \cH (b)$, it is well known that $f_r \in \cH (b)$. We will show that also $L_r [f] \in \cH (b)$. However, our main result reveals that $L_r[f]$ may diverge.
	\begin{theorem}\label{T:MainresultLogDivergence}
	There exist a non-extreme point $b$ of the unit ball of $H^{\infty}$ and a function $f \in \mathcal{H} (b)$ such that
		$$
		\lim_{r \ra 1^-} \Vert L_r[f] \Vert_b = \infty .
		$$
	\end{theorem}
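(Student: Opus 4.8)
The plan is to mirror, for the logarithmic method, whatever divergence mechanism already works for the Abel method in de Branges–Rovnyak spaces. Since the paper cites \cite{El-Fallah2016} for the failure of the Abel means $f_r$ to converge in $\cH(b)$ for suitable non-extreme $b$, my first move would be to recall that construction and isolate the function $f\in\cH(b)$ and the symbol $b$ for which $\|f_r\|_b\to\infty$. The abstract inclusion result advertised in the introduction (that if one regular summability method includes another for scalar sequences then it includes it for certain Banach-space-valued sequences) is the key lever: the logarithmic method is stated to include the Abel method, so one expects the \emph{boundedness} of the logarithmic means to force boundedness of the Abel means. Contrapositively, if the Abel means blow up, the logarithmic means should blow up too. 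Thus the strategy is to reduce Theorem~\ref{T:MainresultLogDivergence} to the known Abel-divergence example via this inclusion.

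Concretely, I would proceed as follows. First, fix the pair $(b,f)$ from the Abel-divergence result so that $\lim_{r\to1^-}\|f_r\|_b=\infty$, and verify (using the claim in the excerpt that $L_r[f]\in\cH(b)$) that the logarithmic means are genuinely well-defined elements of $\cH(b)$. Second, I would invoke the abstract Banach-space-valued inclusion theorem applied to the $\cH(b)$-valued sequence of partial sums $(s_n[f])_{n\ge0}$: interpreting the Abel and logarithmic means as two power-series summability methods acting on this vector-valued sequence, the inclusion ``logarithmic $\supseteq$ Abel'' at the scalar level should upgrade to the statement that $\cH(b)$-norm convergence (or at least boundedness) of $L_r[f]$ implies the corresponding behaviour of $f_r$. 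Third, running this implication in contrapositive form: since $\|f_r\|_b\to\infty$, the logarithmic means cannot remain bounded, yielding $\limsup_{r\to1^-}\|L_r[f]\|_b=\infty$.

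The main obstacle, and the step requiring genuine care, is upgrading $\limsup=\infty$ to the clean limit $\lim_{r\to1^-}\|L_r[f]\|_b=\infty$ stated in the theorem. The abstract inclusion gives boundedness-transfer but not obviously the stronger full-limit statement; a $\limsup$ blow-up along a subsequence $r_k\to1$ is all one gets for free. To close this gap I would either strengthen the underlying Abel example so that the monotonicity or regularity of the means forces a true limit, or exploit a quantitative lower bound for $\|L_{r}[f]\|_b$ in terms of $\|f_{\rho}\|_b$ for a comparable radius $\rho=\rho(r)$, controlling the logarithmic weights $\tfrac{r}{\log(1/(1-r))}\cdot\tfrac{r^n}{n+1}$ so that the blow-up is uniform as $r\to1^-$ rather than merely along a subsequence. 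A secondary technical point is to confirm that the ``certain Banach-space-valued sequences'' to which the abstract theorem applies actually include the sequence $(s_n[f])$ of partial sums in $\cH(b)$; I expect this to follow from a growth estimate on $\|s_n[f]\|_b$, which I would extract from the structure of the chosen example in \cite{El-Fallah2016}.
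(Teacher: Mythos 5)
There is a genuine gap, and it lies at the heart of your strategy: the inclusion between the two methods points in the opposite direction to the one you need. The paper's (and Borwein's) inclusion is ``Abel $\subseteq$ logarithmic'': every Abel-summable sequence is logarithmic-summable. The correct contrapositive is therefore ``$L_r[f]$ diverges $\Longrightarrow$ $f_r$ diverges'', not the implication you invoke, namely ``$\|f_r\|_b\to\infty \Longrightarrow \|L_r[f]\|_b\to\infty$''. The logarithmic method is the \emph{stronger} method (it sums more sequences), so divergence of the logarithmic means is a strictly stronger assertion than divergence of the Abel means; it cannot be deduced from Abel divergence by any inclusion argument. Indeed, the paper's logical architecture is exactly the reverse of yours: Theorem~\ref{T:MainresultLogDivergence} (logarithmic divergence) is proved directly, and only then is the abstract inclusion theorem (Theorem~\ref{T:scalarInclusionThenOperatorBounded}) used to \emph{deduce} the divergence of the generalized Abel means (Corollary~\ref{C:Mainresult2AbelDivergence}) as a consequence. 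Your proposal tries to run this deduction backwards.

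A related obstruction defeats your fallback plan of a ``quantitative lower bound for $\|L_r[f]\|_b$ in terms of $\|f_\rho\|_b$'': by Lemma~\ref{L:IntegralFormLogMeansHb}, $L_r[f]$ is a vector-valued average $\bigl(\log\tfrac{1}{1-r}\bigr)^{-1}\int_0^r \tfrac{f_t}{1-t}\,dt$, and averages of vectors with large norms can have small norm through cancellation, so $\|f_t\|_b\to\infty$ alone gives no lower bound on $\|L_r[f]\|_b$. The paper's key move is to replace the norm by the \emph{non-negative scalar} functional $g\mapsto (g)^+(0)$: the example of \cite{El-Fallah2016} actually gives $(f_t)^+(0)\ge 0$ for all $t$ with $(f_t)^+(0)\to+\infty$, the operation $g \mapsto g^+$ commutes with the Bochner integral (Theorem~\ref{T:FormulaForL_r[f]+}), and the logarithmic average of a non-negative scalar function tending to $+\infty$ tends to $+\infty$. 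This also resolves, for free, the $\limsup$-versus-$\lim$ issue you correctly flag: non-negativity of $(f_t)^+(0)$ makes the full limit come out of the splitting of the integral. Your proposal does correctly identify several supporting ingredients (the need for $\|s_n[f]\|_b=O(R^n)$ to make $L_r[f]$ a well-defined element of $\cH(b)$, and the reliance on the example of \cite{El-Fallah2016}), but the central reduction is not viable as stated.
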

Theorem \ref{T:MainresultLogDivergence} is proved in \S\ref{SS:DivergenceOfLogInHb}. 
It is a consequence of the example already constructed in \cite{El-Fallah2016} 
to show that $s_n[f]$ and $\sigma_n[f]$ may diverge in $\cH(b)$, together with an
integral formula that links the dilates of a function to its logarithmic means.

As a corollary, we obtain the following result concerning generalized Abel methods $A_r^\alpha$, where $\alpha > -1$ and where the generalized Abel means $A_r^\alpha [f]$ are defined by the following expression: 
	$$
	A_r^\alpha [f] (z) := (1 - r)^{1 +\alpha} \sum_{n \geq 0} \binom{n+ \alpha}{\alpha} s_n[f](z) r^n  \quad (r \in [0, 1) ).
	$$
	\begin{corollary}\label{C:Mainresult2AbelDivergence}
	For every $\alpha > -1$, there exist a non-extreme point $b$ of the unit ball of $H^\infty$ and a function $f \in \cH (b)$ such that $A_r^\alpha [f] \not\ra f$ in $\cH (b)$ as $r \ra 1^-$.
	\end{corollary}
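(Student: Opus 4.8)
The plan is to derive Corollary~\ref{C:Mainresult2AbelDivergence} directly from Theorem~\ref{T:MainresultLogDivergence}, exploiting the fact that the logarithmic method is the strongest of the power-series methods in play: it includes every generalized Abel method $A^\alpha$. Since Theorem~\ref{T:MainresultLogDivergence} produces a pair $(b,f)$ for which $L_r[f]$ diverges in the strongest possible sense, namely $\Vert L_r[f]\Vert_b\ra\infty$, a weaker method cannot possibly converge for the same data. I would therefore keep the very same $b$ and $f$ furnished by Theorem~\ref{T:MainresultLogDivergence} and argue by contradiction.

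The first step is the scalar inclusion: for each fixed $\alpha>-1$, the logarithmic method includes the generalized Abel method $A^\alpha$ on scalar sequences. This rests on two classical facts from summability theory. The generalized Abel methods $A^\alpha$ ($\alpha>-1$) are all equivalent to the ordinary Abel method $A=A^0$ (note that $A^0_r[f]=f_r$, the dilation). And the logarithmic method includes $A$, as recalled in the introduction. Composing these gives $L\supseteq A^\alpha$. Both $A^\alpha$ and $L$ are regular power-series methods, so the hypotheses needed to invoke the abstract transfer result are in place.

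Next I would bring in the abstract Banach-space-valued inclusion theorem advertised in the abstract. Suppose, toward a contradiction, that $A_r^\alpha[f]\ra f$ in $\cH(b)$ as $r\ra1^-$; equivalently, the sequence of Taylor partial sums $(s_n[f])_{n\geq0}$, viewed as a sequence in the Banach space $\cH(b)$, is $A^\alpha$-summable to $f$. This is exactly the situation to which the transfer theorem applies: a scalar inclusion $L\supseteq A^\alpha$ together with an $A^\alpha$-summable $\cH(b)$-valued sequence yields $L$-summability to the same limit. Hence $L_r[f]\ra f$ in $\cH(b)$, so $\Vert L_r[f]\Vert_b\ra\Vert f\Vert_b<\infty$ as $r\ra1^-$. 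This contradicts the conclusion $\lim_{r\ra1^-}\Vert L_r[f]\Vert_b=\infty$ of Theorem~\ref{T:MainresultLogDivergence}, and therefore $A_r^\alpha[f]\not\ra f$.

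I expect the main obstacle to be the interface between the scalar and vector-valued settings, that is, checking that the transfer theorem genuinely applies: that $A^\alpha$ and $L$ fit its framework as regular power-series methods, and that the assumed $A^\alpha$-summability of $(s_n[f])_{n\geq0}$ places this sequence in the admissible class. On the scalar side, the one nontrivial ingredient is the equivalence $A^\alpha\equiv A^0$, which combined with $L\supseteq A$ gives $L\supseteq A^\alpha$. Once these are in hand, the divergence of $A_r^\alpha[f]$ is immediate from Theorem~\ref{T:MainresultLogDivergence}; no genuinely new estimate is required.
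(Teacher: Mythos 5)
Your overall route---contradiction via a scalar inclusion $A^\alpha\subseteq L$, the abstract transfer theorem, and Theorem~\ref{T:MainresultLogDivergence}---is the paper's, but the way you invoke the transfer theorem has a genuine gap. Theorem~\ref{T:scalarInclusionThenOperatorBounded} is \emph{not} the statement that a scalar inclusion $K\subseteq H$ upgrades a single $K$-summable Banach-space-valued sequence to an $H$-summable one; if that were true, the theorem would be superfluous (and so would the word ``certain'' in the abstract). Its hypotheses require the sequence to arise as $(S_n(x))_{n\ge 0}$ for bounded operators converging pointwise on a dense subset $W$, and, crucially, that $(S_n(x))$ be $K$-summable to $S(x)$ for \emph{every} $x\in X$: the proof applies the Banach--Steinhaus theorem (twice) to the whole family in order to obtain the uniform bound \eqref{E:bound}, and that is where the real work is. Your contradiction hypothesis---``$A_r^\alpha[f]\to f$ for the particular $f$ of Theorem~\ref{T:MainresultLogDivergence}''---is too weak to feed into the theorem: for a single vector-valued sequence, scalar inclusion only tells you that $\sum_n h_n(r)\phi(S_n(x))$ behaves well for each functional $\phi$, which gives neither the existence in norm of the $L$-means nor their norm convergence. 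The repair is to negate the corollary correctly: fix $b$ as in Theorem~\ref{T:MainresultLogDivergence} and suppose that $A_r^\alpha(f)\to f$ for \emph{all} $f\in\cH(b)$; then Theorem~\ref{T:scalarInclusionThenOperatorBounded} applies with $X=Y=\cH(b)$, $S_n=s_n$, $S=I$ and $W$ the polynomials (dense by Theorem~\ref{T:densitypolynomialsAndNOnextreme}), and yields $L$-summability of $(s_n(f))$ for every $f\in\cH(b)$, contradicting Theorem~\ref{T:MainresultLogDivergence}. This is exactly the paper's argument.

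A secondary problem is your justification of the scalar inclusion itself. You assert that all $A^\alpha$ with $\alpha>-1$ are scalar-equivalent to $A^0$ and then compose with $L\supseteq A^0$. But the fact recalled in \S\ref{S:SummMEthodBanachSpaces} is only a one-sided inclusion (for $\alpha<\beta$, the method of order $\beta$ is scalar-included in the method of order $\alpha$), so for $-1<\alpha<0$ your composition runs in the wrong direction, and the claimed equivalence is left unproved. The paper avoids this by quoting Borwein's result (Theorem~\ref{T:FactsAboutAalpha}) that every generalized Abel method of order $\alpha>-1$ is scalar-included in the logarithmic method; you should cite that inclusion directly rather than manufacture it from an equivalence.
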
	
This corollary is a consequence of an abstract result in functional analysis. It enables us to compare the summability of a sequence of elements in a Banach space with respect to two summability methods, based on the inclusion of one summability method in the other for scalar sequences. This theorem is proved in \S\ref{S:Scalar-inclusionSommBanachSpace}, and Corollary~\ref{C:Mainresult2AbelDivergence} 
is deduced in \S\ref{SS:DivergenceOfPowerSeriesMeans}.

\section{Sequence-to-function summability methods}\label{S:SummMEthodBanachSpaces}
We start by defining some terminology in summability theory. Our main references are Hardy \cite{hardy1949} and Boos \cite{Boos2000}. Throughout this section, we let $X$ denote a Banach space over the complex numbers $\bC$ and  $\Vert \cdot \Vert_X$ be its norm. We denote by $c (X)$  the space of convergent sequences in $X$. 

A \textit{sequence-to-function summability method} $K$ is given by a sequence $(k_n)_{n \geq 0}$ of functions $k_n : [0, R) \ra \bC$ where $R \in (0, \infty ]$. The \textit{$K$-means} of a sequence of vectors $x := (x_n)_{n \geq 0} \subset X$ are defined by the following formal series
		\begin{align*}
		K_r [x] := \sum_{n \geq 0} k_n (r) x_n  \quad  (r \in [0, R ) ).
		\end{align*}
	We say that a sequence $x := (x_n)_{n \geq 0} \subset X$ is \textit{$K$-summable} or \textit{summable by the method $K$} if the series defining $K_r [x]$ converges for every $r \in [0, R)$ and, moreover, $K_r[x]$ converges in norm to some $y \in X$ as $r \ra R^-$. We say that a sequence-to-function summability method $K$ is \textit{regular} if, 
	whenever $X$ is a Banach space and 
	 $(x_n)_{n \geq 0} \in c (X)$, then $(x_n)_{n \geq 0}$ is $K$-summable and $\lim_{r \ra R^-} K_r[x] = \lim_{n \ra \infty} x_n$. Necessary and sufficient conditions for a sequence-to-function summability method to be regular are given in the following theorem, which is a slight modification of Theorem 5 in \cite[p.~49]{hardy1949}.
		\begin{theorem}\label{T:conditionForRegularity}
		Let $X$ be a Banach space. A sequence-to-function summability method $K$ is regular if and only if the following conditions hold :
			\begin{itemize}
			\item there exists an $R_0 \in [0, R)$ such that the function $r \mapsto \sum_{n \geq 0} |k_n (r)|$ is uniformly bounded on $[R_0, R)$;
			\item for each $n \geq 0$, we have $k_n (r) \ra 0$ as $r \ra R^-$;
			\item the function $k(r) := \sum_{n \geq 0} k_n (r)$ converges to $1$ as $r \ra R^-$.
			\end{itemize}
		\end{theorem}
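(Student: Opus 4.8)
The plan is to prove the two implications separately, exploiting the fact that the three listed conditions involve only the scalar coefficient functions $k_n$, not the space $X$, together with the fact that regularity is demanded for \emph{every} Banach space. Consequently I can check necessity by specializing to the scalar case $X = \bC$ (where the conditions are literally the same), while for sufficiency I must argue in an arbitrary $X$.

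For the necessity, I would test $K$ against convenient convergent scalar sequences. Feeding $K$ the sequence equal to $1$ in the $n$-th slot and $0$ elsewhere (which converges to $0$) forces $k_n(r) \ra 0$ as $r \ra R^-$, giving the second condition; feeding it the constant sequence $1$ (converging to $1$) forces $k(r) = \sum_n k_n(r) \ra 1$, giving the third and, incidentally, the convergence of the defining series. The first condition is the delicate one. For each fixed $r$ I would realize the $K$-mean as the linear functional $T_r(a) := \sum_n k_n(r) a_n$ on the Banach space $c_0$ of null sequences. Regularity guarantees this series converges for every $a \in c_0$, and a standard partial-sum argument via the uniform boundedness principle then shows $(k_n(r))_n \in \ell^1$ with $\Vert T_r \Vert = \sum_n |k_n(r)|$. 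Since $T_r(a) \ra 0$ as $r \ra R^-$ for each $a \in c_0$, the family $\{T_r\}$ is pointwise bounded near $R$, and a further application of the uniform boundedness principle should yield $\limsup_{r \ra R^-} \Vert T_r \Vert < \infty$, which is exactly the uniform boundedness of $\sum_n |k_n(r)|$ on some $[R_0, R)$.

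For the sufficiency, I would argue directly in an arbitrary Banach space $X$. Given $(x_n) \in c(X)$ with limit $x$, the sequence $(x_n)$ is bounded, so condition~(1) makes $\sum_n k_n(r) x_n$ absolutely convergent for $r \in [R_0, R)$. The heart of the matter is the decomposition
$$
K_r[x] - x = \sum_{n \geq 0} k_n(r)(x_n - x) + \op k(r) - 1 \fp x,
$$
whose last term vanishes as $r \ra R^-$ by condition~(3). For the sum I would fix $\varepsilon > 0$, choose $N$ with $\Vert x_n - x \Vert_X < \varepsilon$ for $n > N$, and split at $N$: the finite head tends to $0$ as $r \ra R^-$ by condition~(2), while the tail is bounded by $\varepsilon \sum_{n > N} |k_n(r)| \leq \varepsilon C$, with $C$ the uniform bound from condition~(1). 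Letting $\varepsilon \ra 0$ then gives $K_r[x] \ra x$. This argument uses nothing special about $X$, so it establishes regularity in full generality.

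The main obstacle, as flagged, is the necessity of condition~(1): passing from the pointwise statement $T_r(a) \ra 0$ (one $a$ at a time) to a \emph{uniform} bound on $\Vert T_r \Vert$ over a left-neighborhood of $R$. The care needed lies in the continuous parameter $r$, since pointwise convergence only bounds $r \mapsto T_r(a)$ beyond a threshold depending on $a$. I would therefore route the uniform boundedness principle through arbitrary sequences $r_j \ra R^-$: on each such sequence $\{T_{r_j}\}$ is pointwise bounded, hence norm-bounded, and the assumption $\limsup_{r \ra R^-}\Vert T_r\Vert = \infty$ would produce a sequence with $\Vert T_{r_j}\Vert \ra \infty$, a contradiction.
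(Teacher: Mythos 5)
Your proof is correct, and its skeleton matches the paper's: both reduce the necessity of the three conditions to the scalar case $X=\bC$ (using that $\bC$ embeds isometrically into any Banach space and that the $k_n$ are scalar-valued), and both prove sufficiency by the standard decomposition $K_r[x]-x=\sum_{n\ge0}k_n(r)(x_n-x)+(k(r)-1)x$ with a split at a large index $N$, which is exactly the ``elementary estimate'' the paper leaves implicit. The genuine difference is in the scalar necessity: the paper simply cites the classical continuous-parameter Toeplitz theorem (Hardy's Theorem 5), whereas you reprove it from scratch by realizing each $K_r$ as a functional $T_r$ on $c_0$, applying Banach--Steinhaus once to the partial-sum functionals to get $(k_n(r))_n\in\ell^1$ with $\Vert T_r\Vert_{(c_0)^*}=\sum_n|k_n(r)|$, and a second time along arbitrary sequences $r_j\to R^-$ to force $\limsup_{r\to R^-}\sum_n|k_n(r)|<\infty$; this yields a self-contained functional-analytic argument in place of the classical gliding-hump construction, at the cost only of length. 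One caveat, shared by your sufficiency argument and by the theorem as stated: condition (1) controls $\sum_n|k_n(r)|$ only on $[R_0,R)$, hence guarantees convergence of the series defining $K_r[x]$ only there, while the paper's definition of $K$-summability requires convergence at every $r\in[0,R)$; this does not affect the limit assertion as $r\to R^-$ and is an artifact of the statement rather than a gap in your proof.
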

		\begin{proof}
		By considering the homeomorphism $r \mapsto \log \op \frac{R}{R - r} \fp$ from $[0, R )$ onto $[0, \infty )$, we may restrict our attention to the situation to the interval $[0, \infty )$.
		
	Suppose that the method $K$ is regular. Since $\bC$ embeds isometrically into $X$ and the functions $k_n (r)$ are complex-valued, $K$ is regular for the space of convergent complex-valued sequences. Then the result follows from the classical case $X = \bC$ (see Theorem 5 in \cite[p.~49]{hardy1949}).
		 
		If the conditions are satisfied, then elementary estimates using the conditions show that $K_r[x] \ra \lim_{n \ra \infty} x_n$ as $r \ra R^-$ for any sequence $(x_n)_{n \geq 0} \in c (X)$.
		\end{proof}
		
		Let $K$ and $H$ be two sequence-to-function summability methods.
		We say that $K$ is \textit{included} in $H$, denoted by $K \subseteq H$, if, whenever $(x_n)_{n \geq 0}$ is a $K$-summable sequence in a Banach space $X$, then $(x_n)_{n \geq 0}$ is also $H$-summable and 
		$$
		\lim_{r \ra R^-} H_r [x] = \lim_{r \ra R^-} K_r[x] .
		$$
	If $K \subseteq H$ and $H \subseteq K$, we say that the two  summability methods are \textit{equivalent}. If $X = \bC$, all of the aforementioned definitions will be preceded by the word ``scalar''. For example, when we write ``the summability method $K$ is scalar-equivalent to the summability method $H$'', this means that they are equivalent when applied to scalar-valued sequences, that is $(x_n)_{n \geq 0} \subset \bC$.
	
	We now give some examples of sequence-to-function summability methods.
		 
	\subsection{Matrix summability methods}\label{Ex:ExempleMatrixSummabilityMethods}
	Let $R = \infty$ and, for each $n \geq 0$, let the function $k_n$ be constant on each interval $[m, m + 1 )$, where $m \geq 0$ is an integer. Then, for each $r \in [m, m + 1 )$, the expression of the $K$-mean becomes
		\begin{align*}
		K_r [x] = \sum_{n \geq 0} k_n (r) x_n = \sum_{n \geq 0} k_n (m) x_n .
		\end{align*}
	Hence the method given by the sequence of functions $k_n$ can be viewed as an infinite matrix $(k_n (m))_{m, n \geq 0}$. In these circumstances, we call the summability method a \textit{matrix summability method}. The necessary and sufficient conditions for a matrix summability method to be regular are attributed to Silverman and Toeplitz (see \cite[p.~43]{hardy1949}). The conditions in Theorem \ref{T:conditionForRegularity} now become: 
		\begin{itemize}
		\item there is a number $M > 0$ such that $\sum_{n \geq 0} |k_n (m)| \leq M$ for every $m \geq 0$;
		\item for each $n \geq 0$, $\lim_{m \ra \infty} k_n (m) = 0$;
		\item we have $\sum_{n \geq 0} k_n (m) \ra 1$, as $m \ra \infty$.
		\end{itemize}
	There is a generalization of the Silverman-Toeplitz Theorem, due to Robinson, to matrix summability methods that are given by a matrix $(C_{m, n})_{m, n \geq 0}$, where $C_{m, n}$ are bounded linear operators on $X$ (see \cite[Theorem IV]{Robinson1950}).
	
	\subsection{Power-series summability methods}
	Let $p(r) := \sum_{n \geq 0} p_n r^n $ be a power series with a radius of convergence $R_p > 0$, where $p_0 > 0$ and $p_n \geq 0$ for $n \geq 1$. Following \S3.6 of \cite{Boos2000}, we say that a sequence $x := (x_n)_{n \geq 0} \subset X$ is \emph{summable by the power-series method} $(p)$, or is \emph{$P$-summable}, if the series
		\begin{align*}
		P_r [x] := \frac{1}{p (r)} \sum_{n \geq 0} p_n x_n r^n
		\end{align*}
	converges for each $r \in [0, R)$ and there exists a $y \in X$ such that
		\begin{align*}
		P_r[x] \ra y \quad (r \ra R^- ).
		\end{align*}
	According to Theorem \ref{T:conditionForRegularity}, a power-series method $(p)$ is regular if and only if $p(r) \ra \infty$ as 
	$r \ra R_p^-$. 
	The \textit{Abel summability method} is a special case of the power-series method $(p)$ with 
	$p(r) = (1 - r)^{-1}$ 
	and $r \in [0, 1)$. The $A_r$-means are
		\begin{align*}
		A_r [x] = (1 - r) \sum_{n \geq 0} x_n r^n  \quad (0 \leq r < 1 ) .
		\end{align*}

	In this paper, we apply a power-series method to the sequence of partial sums $(s_n [f])_{n \geq 0}$ of the Taylor expansion of a function $f \in \hol ( \bD )$. The expression of the means defined by a power-series method $(p)$ are
		\begin{align*}
		P_r [f] (z) := \frac{1}{p (r)} \sum_{n \geq 0} p_n s_n[f] (z ) r^n ,
		\end{align*}
	where $p$ has a radius of convergence $R_p \geq 1$. Since $|s_n [f] (z)| \leq C (R) R^n$ for any $R > 1$ and some constant $C(R) > 0$, the function $P_r[f]$ is well-defined for each $z \in \bD$. Also, the series defining $P_r [f]$ converges uniformly on compact subsets of $\bD$, and thus it defines a function holomorphic on all of $\bD$.
	
	A useful power-series method is the logarithmic method. As we shall see later in the paper, this power-series method is convenient because it contains many other summability methods. The logarithmic method is the power-series method associated with the power series
		\begin{align*}
		l (r) := \sum_{n \geq 0} \frac{r^n}{n + 1} = \frac{1}{r} \log \frac{1}{1 - r} \quad (0 \leq r < 1 ) .
		\end{align*}
	Thus, the expression of the logarithmic mean of the partial sums $s_n[f]$ is
		\begin{align*}
		L_r[f] (z) := \frac{r}{\log \frac{1}{1 - r}} \sum_{n \geq 0} \frac{s_n[f] (z)}{n + 1} r^n \quad (0 \leq r < 1 ).
		\end{align*}
	
	We will also study the generalized Abel methods. Given a number $\alpha > -1$, the generalized Abel means of order $\alpha$ are associated with the power series
		\begin{align*}
		a_r^{\alpha} (r) := \sum_{n \geq 0} \binom{n + \alpha}{\alpha} r^n = \frac{1}{(1 - r)^{1 + \alpha}} \quad (r \in [0, 1)) .
		\end{align*}
	Applied to the partial sums $(s_n[f])_{n \geq 0}$ of the Taylor series of $f \in \hol (\bD )$, the expression of the mean $A_r^\alpha [f]$ is
		\begin{align*}
		A_r^{\alpha}[f] (z) := (1 - r)^{\alpha + 1} \sum_{n \geq 0} \binom{n + \alpha}{\alpha} s_n[f] (z) r^n . 
		\end{align*}
	If $\alpha < \beta$, then the generalized Abel method of order $\beta$ is scalar-included in the generalized Abel method of order $\alpha$ (see \cite[Theorem 2]{Borwein1957a}). When $\alpha = 1$, we obtain the classical Abel means. As we mention earlier in the paper, the expression of $A_r^1[f]$ can be rearranged to give
		\begin{align*}
		A_r^1 [f] (z) = \sum_{n \geq 0} a_n r^n z^n ,
		\end{align*}
	which is the dilate $f_r$ of $f \in \hol (\bD )$. We present a pointwise relation between the logarithmic means and the dilates of a function. It will be used later in \S\ref{SS:DivergenceOfLogInHb} and its proof is straightforward.
		\begin{lemma}
		For any $f \in \hol (\bD )$, $r \in [0, 1)$ and $z \in \bD$, we have
			\begin{equation}
			L_r[f] (z) = \frac{1}{\log \frac{1}{1 - r}} \int_0^r \frac{f_t (z)}{1 - t} \, dt . \label{E:IntegralFormulaLogAndAbel}
			\end{equation}
		\end{lemma}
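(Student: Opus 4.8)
The plan is to reduce the identity to a term-by-term integration of the power series defining the integrand. The starting point is the Abel-type identity already recorded in the excerpt, namely $f_t(z) = (1-t)\sum_{n\ge 0} s_n[f](z)\, t^n$, which expresses the dilate as a telescoping rearrangement of the partial sums. Dividing by $1-t$ gives immediately
\[
\frac{f_t(z)}{1-t} = \sum_{n\ge 0} s_n[f](z)\, t^n \qquad (t\in[0,r],\ z\in\bD),
\]
so the integrand is a genuine power series in $t$ whose coefficients are the partial sums evaluated at $z$.

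Next I would integrate this series over $[0,r]$ term by term. Formally this yields
\[
\int_0^r \frac{f_t(z)}{1-t}\, dt = \sum_{n\ge 0} s_n[f](z) \int_0^r t^n\, dt = \sum_{n\ge 0} \frac{s_n[f](z)}{n+1}\, r^{n+1} = r\sum_{n\ge 0}\frac{s_n[f](z)}{n+1}\, r^n .
\]
Multiplying both sides by $1/\log\op\frac{1}{1-r}\fp$ and comparing with the definition of $L_r[f]$ given earlier in the paper then gives exactly \eqref{E:IntegralFormulaLogAndAbel}.

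The only point requiring care, and hence the ``main obstacle'' (though a mild one, consistent with the claim that the proof is straightforward), is the justification of the interchange of summation and integration. For this I would invoke the estimate $|s_n[f](z)|\le C(R)R^n$ valid for every $R>1$, which is already stated in the excerpt. Fixing $r\in[0,1)$ and $z\in\bD$, I would choose $R$ with $1<R<1/r$; then $\sum_{n\ge 0}|s_n[f](z)|\, t^n \le C(R)\sum_{n\ge 0}(Rt)^n$ converges uniformly for $t\in[0,r]$ because $Rr<1$. Uniform convergence on the compact interval $[0,r]$ legitimizes the termwise integration above, completing the argument.
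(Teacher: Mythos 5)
Your proof is correct, and it is precisely the argument the authors have in mind: the paper omits the proof entirely, declaring it ``straightforward,'' and the intended route is exactly yours --- the Abel rearrangement $f_t(z)=(1-t)\sum_{n\ge0}s_n[f](z)t^n$, followed by term-by-term integration over $[0,r]$, justified by the bound $|s_n[f](z)|\le C(R)R^n$ with $Rr<1$. Your write-up supplies the uniform-convergence justification cleanly; the only cosmetic caveat is that at $r=0$ both sides of \eqref{E:IntegralFormulaLogAndAbel} degenerate to $0/0$, so the identity is really asserted for $r\in(0,1)$.
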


\section{Background on de Branges--Rovnyak spaces}
	Let $\phi \in L^{\infty} (\bT )$. The Toeplitz operator $T_{\phi} : H^2 \ra H^2$ with symbol $\phi$ is defined as
		\begin{align*}
		T_{\phi} f := P_+ (\phi f )  \quad (f \in H^{2}),
		\end{align*}
	where $P_+ : L^2 (\bT ) \ra H^2$ is the orthogonal projection of $L^2 (\bT )$ onto $H^2$. This is clearly a bounded operator with $\Vert T_{\phi} \Vert \leq \Vert \phi \Vert_{L^{\infty} (\bT )}$. (In fact, $\Vert T_{\phi} \Vert = \Vert \phi \Vert_\infty$ by a theorem of Brown and Halmos, but we do not need this here.) The adjoint of $T_\phi$ is $T_{\overline{\phi}}$. If $\phi \in H^{\infty}$, then $T_\phi$ is simply the operator of multiplication by $\phi$. We now introduce the de Branges--Rovnyak space associated to a function $b \in H^{\infty}$, where $\Vert b \Vert_{\infty} \leq 1$. This is Sarason's definition taken from \cite{SarasonHSUD1994}.
		\begin{definition}\label{D:DefinitionOfHb}
		Let $b \in H^{\infty}$ with $\Vert b \Vert_{\infty} \leq 1$. The associated \textit{de Branges--Rovnyak space}, denoted by $\mathcal{H} (b)$, is the range space $(I - T_bT_{\overline{b}})^{1/2} H^2$ equipped with the following inner product
			\begin{align*}
			\left\langle (I - T_bT_{\overline{b}})^{1/2} f , (I - T_bT_{\overline{b}} )^{1/2} g \right\rangle_b := \left\langle f , g \right\rangle_2,
			\end{align*}
		where $f , g \in H^2 \ominus \ker (I - T_bT_{\overline{b}})^{1/2}$.
		\end{definition}
	The definition of the inner product $\left\langle \cdot , \cdot \right\rangle_b$ makes the operator $(I - T_bT_{\overline{b}})^{1/2} : H^2 \ra H^2$ a partial isometry from $H^2$ onto $\mathcal{H} (b)$. Its cousin, the space $\mathcal{H} (\overline{b})$, is defined similarly by interchanging the roles of $b$ with $\overline{b}$ in the above definition.
	
	There is a close relationship between $\mathcal{H} (b)$ and $\mathcal{H}(\overline{b})$. This is the content of the next theorem.
		\begin{theorem}\cite[\S II-2]{SarasonHSUD1994}.
		A function $f \in H^2$ belongs to $\mathcal{H} (b)$ if and only if $T_{\overline{b}} f$ belongs to $\mathcal{H} (\overline{b})$. In this case, we have that
			\begin{align*}
			\Vert f \Vert_b^2 = \Vert f \Vert_2^2 + \Vert T_{\overline{b}} f \Vert_{\overline{b}}^2 \text{.}
			\end{align*}
		\end{theorem}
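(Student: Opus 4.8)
The plan is to argue at the level of operators. Write $X := (I - T_bT_{\overline{b}})^{1/2}$ and $Y := (I - T_{\overline{b}}T_b)^{1/2}$, so that $\mathcal{H}(b) = XH^2$ and $\mathcal{H}(\overline{b}) = YH^2$, with $\|Xu\|_b = \|u\|_2$ whenever $u \in H^2 \ominus \ker X$ (and the analogous statement for $Y$). The engine of the proof is the intertwining identity coming from $T_b(I - T_{\overline{b}}T_b) = (I - T_bT_{\overline{b}})T_b$, that is, $T_b Y^2 = X^2 T_b$. Since $X^2$ and $Y^2$ are positive and $\sqrt{\cdot}$ is a uniform limit of polynomials with zero constant term on the common spectrum, this upgrades to $X T_b = T_b Y$ and, taking adjoints, $Y T_{\overline{b}} = T_{\overline{b}} X$. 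I expect this passage from the squares to the square roots to be the one genuinely delicate point; everything else is bookkeeping.

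\emph{Forward direction and the norm identity.} Given $f \in \mathcal{H}(b)$, write $f = Xu$ with $u \in H^2 \ominus \ker X$, so that $\|f\|_b = \|u\|_2$. Then $T_{\overline{b}} f = T_{\overline{b}} Xu = Y(T_{\overline{b}} u)$ lies in $YH^2 = \mathcal{H}(\overline{b})$. To read off its norm I will check that $T_{\overline{b}} u \perp \ker Y$: for $w \in \ker Y$ the intertwining gives $X T_b w = T_b Y w = 0$, so $T_b w \in \ker X$, whence $\langle T_{\overline{b}} u, w\rangle_2 = \langle u, T_b w\rangle_2 = 0$ because $u \perp \ker X$. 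Consequently $\|T_{\overline{b}} f\|_{\overline{b}} = \|T_{\overline{b}} u\|_2$. Finally $\|f\|_2^2 = \|Xu\|_2^2 = \langle (I - T_bT_{\overline{b}})u, u\rangle_2 = \|u\|_2^2 - \|T_{\overline{b}} u\|_2^2$, which rearranges to $\|f\|_b^2 = \|u\|_2^2 = \|f\|_2^2 + \|T_{\overline{b}} f\|_{\overline{b}}^2$, as required.

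\emph{Converse.} Here I note that the computation above is symmetric under interchanging $X \leftrightarrow Y$ and $T_b \leftrightarrow T_{\overline{b}}$ (the pair of intertwining relations is invariant under this swap), so the same argument yields the dual statement: every $g \in \mathcal{H}(\overline{b})$ satisfies $T_b g \in \mathcal{H}(b)$. Now suppose $f \in H^2$ with $T_{\overline{b}} f \in \mathcal{H}(\overline{b})$. Applying the dual statement to $g = T_{\overline{b}} f$ gives $T_b T_{\overline{b}} f \in \mathcal{H}(b)$. On the other hand $(I - T_bT_{\overline{b}})f = X^2 f = X(Xf) \in XH^2 = \mathcal{H}(b)$. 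Adding the two and using that $\mathcal{H}(b)$ is a linear subspace, we conclude that $f = X^2 f + T_b T_{\overline{b}} f \in \mathcal{H}(b)$, which completes the equivalence.
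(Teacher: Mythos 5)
Your proof is correct, but there is nothing in the paper to compare it against: the paper states this theorem as a quotation from Sarason's book \cite[\S II-2]{SarasonHSUD1994} and supplies no proof of its own. Judged on its own merits, your argument is complete. The one step you flag as delicate, upgrading $T_bY^2=X^2T_b$ to $XT_b=T_bY$, does work: induction gives $T_b(Y^2)^n=(X^2)^nT_b$, hence $T_b\,p(Y^2)=p(X^2)\,T_b$ for every polynomial $p$, and since $X^2$ and $Y^2$ are positive contractions their spectra lie in $[0,1]$, where polynomials converge uniformly to $\sqrt{t}$; passing to the limit in operator norm yields the claim. (Your insistence on zero constant term is unnecessary, since constant terms intertwine trivially, but it is harmless.) The kernel bookkeeping is also right: showing $T_{\overline{b}}u\perp\ker Y$ is exactly what is needed to read off $\Vert T_{\overline{b}}f\Vert_{\overline{b}}=\Vert T_{\overline{b}}u\Vert_2$ from the partial-isometry structure of the range norm, and the converse's decomposition $f=X^2f+T_bT_{\overline{b}}f$, with the first summand in $XH^2$ and the second handled by the dual of the forward implication, is a clean way to close the equivalence; the norm identity only needs to be proved in the forward direction, which you do. For context, your route parallels the standard operator-theoretic treatment: the theorem is the special case $A=T_b$ of a general fact about a contraction $A$ and its complementary spaces $\mathcal{H}(A)$ and $\mathcal{H}(A^*)$, and the intertwining relation $A(I-A^*A)^{1/2}=(I-AA^*)^{1/2}A$ that you establish is the usual engine behind that general fact, which is how Sarason's framework handles it.
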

	
	The structure of a de Branges--Rovnyak space depends strongly on whether $b$ is an extreme or a non-extreme point of the unit ball of $H^\infty$. 
		\begin{theorem}\cite[Theorem 2.2]{El-Fallah2016}.\label{T:densitypolynomialsAndNOnextreme}
		Let $b \in H^{\infty}$ with $\Vert b \Vert_\infty \leq 1$. The following statements are equivalent:
			\begin{enumerate}[i)]
			\item $b$ is a non-extreme point of the unit ball of $H^{\infty}$;
			\item $\mathcal{H} (b)$ contains all functions holomorphic in a neighbourhood of $\overline{\bD}$;
			\item $\mathcal{H} (b)$ contains all polynomials;
			\item polynomials are dense in $\cH (b)$.
			\end{enumerate}
		\end{theorem}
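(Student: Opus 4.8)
The plan is to prove the cycle of implications (i) $\Ra$ (ii) $\Ra$ (iv) $\Ra$ (iii) $\Ra$ (i), in which two links are essentially free: (ii) $\Ra$ (iii) holds because polynomials are entire, hence holomorphic in a neighbourhood of $\overline{\bD}$; and (iv) $\Ra$ (iii) holds because the assertion that polynomials are dense already presupposes that they belong to $\cH(b)$. The content is therefore concentrated in the forward step (i) $\Ra$ (ii), in the density step (ii) $\Ra$ (iv), and in the reverse step (iii) $\Ra$ (i). At the outset I would translate (i) into analytic language through the classical de Leeuw--Rudin criterion (see \cite{SarasonHSUD1994}): $b$ is a non-extreme point of the unit ball of $H^\infty$ if and only if $\log(1 - |b|^2) \in L^1(\bT)$, which in turn is equivalent to the existence of an outer function $a \in H^\infty$, normalised by $a(0) > 0$, with the Pythagorean identity $|a|^2 + |b|^2 = 1$ a.e.\ on $\bT$. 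This mate $a$ carries all the quantitative information.

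For (i) $\Ra$ (ii) I would combine the Sarason relation stated above, namely $f \in \cH(b)$ if and only if $T_{\overline{b}} f \in \cH(\overline{b})$, with an explicit description of the cousin space. Since $a, b \in H^\infty$ one has the operator identity $I - T_{\overline{b}} T_b = T_{1 - |b|^2} = T_{\overline{a}} T_a$, and with Douglas's range-inclusion lemma this gives $\cH(\overline{b}) = \operatorname{ran}\!\op (I - T_{\overline{b}} T_b)^{1/2} \fp = \operatorname{ran} T_{\overline{a}} = T_{\overline{a}} H^2$. Membership of a function $f$ holomorphic in a neighbourhood of $\overline{\bD}$ then reduces to solving $T_{\overline{a}} g = T_{\overline{b}} f$ for some $g \in H^2$, after which $\Vert f \Vert_b^2 = \Vert f \Vert_2^2 + \Vert g \Vert_2^2$. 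The crucial point is that $a$ being outer makes $T_{\overline{a}}$ injective with exactly the range needed, so the equation is solvable precisely when the Szeg\H{o} condition holds, and the boundedness of $f$ on $\bT$ then forces $g \in H^2$.

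To obtain density (iv) from (ii) I would use that the reproducing kernels $k_w^b(z) = (1 - \overline{b(w)} b(z)) / (1 - \overline{w} z)$, for $w \in \bD$, span a dense subspace of $\cH(b)$ and are each holomorphic in a neighbourhood of $\overline{\bD}$, their only singularity lying at $1/\overline{w} \notin \overline{\bD}$. It then suffices to approximate, in the $\cH(b)$-norm, any $f$ holomorphic past the boundary by polynomials. This approximation I would derive from the geometric decay of the Taylor coefficients of such an $f$ together with a subexponential bound $\limsup_n \Vert z^n \Vert_b^{1/n} \leq 1$ on the monomials (reflecting the fact that $\cH(b)$-functions are holomorphic on all of $\bD$); note that this is fully consistent with the divergence phenomenon of the present paper, which concerns general $f \in \cH(b)$ whose coefficients need not decay. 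For the reverse implication (iii) $\Ra$ (i) I would argue by contraposition using the variational formula $\Vert f \Vert_b^2 = \sup_{g \in H^2} \op \Vert f + b g \Vert_2^2 - \Vert g \Vert_2^2 \fp$, valid for every $b$: if $b$ is extreme, so $\log(1 - |b|^2) \notin L^1(\bT)$, then completing the square in $g$ turns $\Vert z^n \Vert_b^2$ into a weighted extremal problem with weight $1/(1 - |b|^2)$ restricted to $H^2$, and one checks that its value is infinite for some $n$; thus some monomial escapes $\cH(b)$, contradicting (iii).

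The main obstacle is the analytic core shared by (i) $\Ra$ (ii) and its converse. The unconstrained version of the extremal problem has value $\Vert f \Vert_2^2 + \int_\bT |f|^2 |b|^2 / (1 - |b|^2) \, dm$, which can diverge even for non-extreme $b$, whereas the true norm is the supremum over the smaller class of \emph{analytic} competitors $g$, and this is finite exactly when the outer mate $a$ exists in $H^2$. Making this gap quantitative --- producing the solution $g \in H^2$ of $T_{\overline{a}} g = T_{\overline{b}} f$ and controlling its norm --- is where the Szeg\H{o}/de Leeuw--Rudin condition does the essential work, and the same analysis run in reverse drives (iii) $\Ra$ (i); the subexponential monomial bound needed for the density step is the remaining delicate verification.
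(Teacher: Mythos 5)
A preliminary remark: the paper itself gives no proof of this statement --- it is imported wholesale from \cite[Theorem 2.2]{El-Fallah2016} --- so your attempt can only be judged against the standard arguments (Sarason \cite{SarasonHSUD1994} and the norm formula \eqref{E:FormulaForHbNormHolNeighborhood}, which the paper does quote). Judged that way, your proposal has a fatal gap in the density step (ii) $\Ra$ (iv). You claim that each kernel $k_w^b(z)=(1-\overline{b(w)}b(z))/(1-\overline{w}z)$ is holomorphic in a neighbourhood of $\overline{\bD}$, ``its only singularity lying at $1/\overline{w}$''. This is false in general: the numerator contains $b(z)$ itself, and a generic $b$ in the unit ball of $H^\infty$ extends across no arc of $\bT$ (it may well have $\bT$ as natural boundary). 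So the kernels need not belong to the class in (ii), and your scheme --- kernels are dense, kernels lie in (ii), members of (ii) are polynomially approximable --- loses its dense starting class. The piece you can salvage (a function holomorphic past $\overline{\bD}$ is the $\cH(b)$-limit of its Taylor partial sums, because $\|z^n\|_b^2=1+\sum_{k=0}^n|c_k|^2$ grows subexponentially by \eqref{E:FormulaForHbNormHolNeighborhood}, where $\phi=b/a=\sum_k c_kz^k$) only shows that such functions lie in the closed span of the polynomials; it does not show that any class is dense. The known proof of (iv) is an orthogonality argument with no analogue in your proposal: if $f\in\cH(b)$ satisfies $\langle f,z^n\rangle_b=0$ for all $n$, then, using $(z^n)^+=\sum_{m=0}^n\overline{c_{n-m}}\,z^m$, one gets $f+\phi f^+=0$ on $\bD$, i.e.\ $af=-bf^+$; setting $k:=f/b=-f^+/a$ one checks $|k|^2=|f|^2+|f^+|^2$ on $\bT$, so $k\in N^+\cap L^2=H^2$, and then $0=T_{\overline{b}}f-T_{\overline{a}}f^+=T_{|b|^2}k+T_{|a|^2}k=k$, whence $f=0$.

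There is a second genuine error, in (i) $\Ra$ (ii): you assert that, in the equation $T_{\overline{a}}g=T_{\overline{b}}f$, ``the boundedness of $f$ on $\bT$ then forces $g\in H^2$''. It does not. By the monomial computation above, $\|z^n\|_b^2=1+\sum_{k=0}^n|c_k|^2$, and there exist non-extreme $b$ whose mate satisfies $\phi=b/a\notin H^2$ (e.g.\ take $a,b$ outer with $|a|^2=1/(1+\psi^2)$, $|b|^2=\psi^2/(1+\psi^2)$, where $\psi\ge1$, $\log\psi\in L^1(\bT)$, $\psi\notin L^2(\bT)$); for such $b$ the monomial norms are unbounded while $\|z^n\|_\infty=1$, so the closed-graph theorem shows $H^\infty\not\subset\cH(b)$ --- bounded functions can lie outside $\cH(b)$. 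What actually puts $g=f^+$ in $H^2$ when $f$ is holomorphic in a neighbourhood of $\overline{\bD}$ is the geometric decay $|a_j|=O(\rho^j)$, $\rho<1$, of its Taylor coefficients beating the subexponential growth of the $c_j$; that is exactly formula \eqref{E:FormulaForHbNormHolNeighborhood}, which your argument never invokes. Finally, your (iii) $\Ra$ (i) rests on the right mechanism (the complemented-space formula plus Szeg\H{o}'s theorem) but the decisive verification is only asserted; for the record it does work: if $z^n\in\cH(b)$, the variational formula yields $|\widehat{bh}(n)|^2\le(\|z^n\|_b^2-1)\int_\bT(1-|b|^2)|h|^2\,dm$ for all $h\in H^2$, and choosing $n$ to be the order of the first nonzero Taylor coefficient of $b$ gives $\widehat{bh}(n)=\hat{b}(n)h(0)$, after which Szeg\H{o}'s theorem forces $\log(1-|b|^2)\in L^1(\bT)$, i.e.\ $b$ non-extreme. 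In summary: of the three substantive implications, one is sketched correctly but incompletely, and the other two, as written, fail.
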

	From now on, we shall simply say that $b$ is ``extreme'' or ``non-extreme'' to indicate that $b$ is correspondingly an extreme or non-extreme point of the unit ball of $H^{\infty}$.
	The function $b$ is non-extreme if and only if $\log (1 - |b|^2 ) \in L^1 (\bT )$ (see \cite[Theorem 7.9]{DurenTHpS1970}). In this case, there exists a unique outer function $a \in H^{\infty}$, normalized so that $a(0) > 0$, such that $|a|^2 + |b|^2 = 1$ a.e. on $\bT$. We call $(b, a )$ the  \textit{Pythagorean pair} associated to $b$. 
		
	There is a useful characterization of $\mathcal{H} (b)$ when $b$ is non-extreme. 
		\begin{theorem}\cite[\S IV-1]{SarasonHSUD1994}\label{T:CharacterizationNonextremeHb}.
		Let $b$ be non-extreme, let $(b,a)$ be the corresponding Pythagorean pair, and let $f \in H^2$. Then $f \in \mathcal{H} (b)$ if and only if $T_{\overline{b}}f \in T_{\overline{a}} H^2$. In this case, there exists a unique function $f^+ \in H^2$ such that $T_{\overline{b}}f = T_{\overline{a}} f^+$, and
			\begin{align*}
			\Vert f \Vert_b^2 = \Vert f \Vert_2^2 + \Vert f^+ \Vert_2^2 \text{.}
			\end{align*}
		\end{theorem}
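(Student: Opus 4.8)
The plan is to reduce the whole statement to the companion space $\cH(\overline{b})$, using the theorem stated just above (that $f\in\cH(b)$ iff $T_{\overline{b}}f\in\cH(\overline{b})$, with $\Vert f\Vert_b^2=\Vert f\Vert_2^2+\Vert T_{\overline{b}}f\Vert_{\overline{b}}^2$), and to identify $\cH(\overline{b})$ concretely as the operator range $T_{\overline{a}}H^2$. The engine is a Pythagorean identity at the operator level. Since $a,b\in H^\infty$, the operators $T_a,T_b$ are just multiplication, so for any $g\in H^2$ one has $T_{\overline{a}}T_a g=P_+(|a|^2 g)$ and $T_{\overline{b}}T_b g=P_+(|b|^2 g)$; adding these and using $|a|^2+|b|^2=1$ a.e.\ on $\bT$ gives
$$
T_{\overline{a}}T_a+T_{\overline{b}}T_b=I .
$$
In particular $I-T_{\overline{b}}T_b=T_{\overline{a}}T_a=T_a^{*}T_a=|T_a|^2$.

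First I would note that $\cH(\overline{b})$, obtained from Definition~\ref{D:DefinitionOfHb} by interchanging $b$ and $\overline{b}$, is the range of $(I-T_{\overline{b}}T_b)^{1/2}=(T_a^{*}T_a)^{1/2}=|T_a|$. A standard operator-range fact, proved through the polar decomposition $T_a=U|T_a|$, gives $\operatorname{ran}|T_a|=\operatorname{ran}T_a^{*}=\operatorname{ran}T_{\overline{a}}=T_{\overline{a}}H^2$ as sets. Hence $\cH(\overline{b})=T_{\overline{a}}H^2$, and combining this with the companion theorem yields immediately the claimed equivalence: $f\in\cH(b)$ iff $T_{\overline{b}}f\in\cH(\overline{b})=T_{\overline{a}}H^2$.

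Next I would settle uniqueness and the norm identity, and this is where the hypothesis that $a$ is outer does the real work. Because $a$ is outer it is nonvanishing as a multiplier and cyclic in $H^2$, so $T_a$ is injective with dense range; therefore $\ker|T_a|=\{0\}$, and the partial isometry $U$ in $T_a=U|T_a|$ is in fact unitary. Injectivity of $T_{\overline{a}}=T_a^{*}$ gives uniqueness of the $f^+$ with $T_{\overline{b}}f=T_{\overline{a}}f^+$. For the norm, the definition of $\langle\cdot,\cdot\rangle_{\overline{b}}$ makes $|T_a|$ an \emph{isometry} here (its kernel being trivial), so $\Vert|T_a|g\Vert_{\overline{b}}=\Vert g\Vert_2$ for all $g\in H^2$. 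Writing $T_{\overline{a}}f^+=T_a^{*}f^+=|T_a|(U^{*}f^+)$ and invoking unitarity of $U$, I get $\Vert T_{\overline{b}}f\Vert_{\overline{b}}=\Vert T_{\overline{a}}f^+\Vert_{\overline{b}}=\Vert U^{*}f^+\Vert_2=\Vert f^+\Vert_2$. Substituting into the companion theorem's identity $\Vert f\Vert_b^2=\Vert f\Vert_2^2+\Vert T_{\overline{b}}f\Vert_{\overline{b}}^2$ produces $\Vert f\Vert_b^2=\Vert f\Vert_2^2+\Vert f^+\Vert_2^2$, as required.

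The main obstacle I expect is the careful bookkeeping of (non-closed) operator ranges and, above all, the matching of the abstractly defined $\cH(\overline{b})$-range norm with the $H^2$-norm of $f^+$. Everything hinges on outerness of $a$ upgrading the partial isometry $U$ to a genuine unitary, so that no projection onto $(\ker|T_a|)^{\perp}$ intervenes and the isometry $\Vert|T_a|g\Vert_{\overline{b}}=\Vert g\Vert_2$ holds on all of $H^2$. The purely algebraic Pythagorean identity and the reduction through the companion theorem are then routine.
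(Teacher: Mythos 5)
Your proof is correct. Note that the paper itself offers no proof of this statement---it is quoted verbatim from Sarason \cite[\S IV-1]{SarasonHSUD1994}---so the only real question is whether your argument is sound, and it is: the operator identity $T_{\overline{a}}T_a+T_{\overline{b}}T_b=I$ (from $|a|^2+|b|^2=1$ a.e.\ on $\bT$), the range fact $\operatorname{ran}(T^*T)^{1/2}=\operatorname{ran}T^*$, and the use of outerness of $a$ to get $\ker T_a=\{0\}$ and $\overline{aH^2}=H^2$ (hence $U$ unitary, $T_{\overline{a}}$ injective, and the range norm a genuine isometry on all of $H^2$) are exactly the ingredients of the standard argument in the cited source; Sarason packages the range equality via Douglas's lemma on operator ranges rather than the polar decomposition, but the substance is the same, so this counts as essentially the same approach rather than a new route.
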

	
	The authors of \cite{GuillotChevrotRans2010} obtained an explicit formula for the $\mathcal{H} (b)$-norm of functions $f$ holomorphic in a neighbourhood of $\overline{\bD}$. Notice that, if $(b, a)$ is a pair, then $\phi := b/a \in N^+$, the Smirnov class.
		\begin{theorem}\cite[Theorem 4.1]{GuillotChevrotRans2010}
		Let $(b, a)$ be a pair, and let $\phi := b/a$, say $\phi (z) = \sum_{j \geq 0} c_n z^n$. Let $f$ be holomorphic in a neighbourhood of $\overline{\bD}$ with expansion $f(z) = \sum_{j \geq 0} a_n z^n$. Then the series $\sum_{j \geq 0} a_{j + n} \overline{c}_j$ converges absolutely for each $n$ and 
			\begin{align}
			\Vert f \Vert_b^2 = \sum_{n \geq 0} |a_n|^2 + \sum_{n \geq 0} \Big\lvert \sum_{j \geq 0} a_{j + n} \overline{c}_j\Big\rvert^2 . \label{E:FormulaForHbNormHolNeighborhood}
			\end{align}
		\end{theorem}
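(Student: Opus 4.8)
The plan is to identify, for $f$ holomorphic in a neighbourhood of $\overline{\bD}$, the companion function $f^+$ furnished by Theorem~\ref{T:CharacterizationNonextremeHb}, and then simply to read off its Taylor coefficients. Since $b$ is non-extreme and $f$ is holomorphic in a neighbourhood of $\overline{\bD}$, Theorem~\ref{T:densitypolynomialsAndNOnextreme} gives $f \in \cH(b)$, and Theorem~\ref{T:CharacterizationNonextremeHb} provides a unique $f^+ \in H^2$ with $T_{\overline{b}} f = T_{\overline{a}} f^+$ and $\Vert f\Vert_b^2 = \Vert f\Vert_2^2 + \Vert f^+\Vert_2^2$. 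The first term is immediate, $\Vert f\Vert_2^2 = \sum_{n\geq 0}|a_n|^2$, so everything reduces to showing that $f^+$ equals the function $g(z) := \sum_{n\geq 0}\gamma_n z^n$ with $\gamma_n := \sum_{j\geq 0} a_{j+n}\overline{c}_j$; then $\Vert f^+\Vert_2^2 = \sum_{n\geq 0}|\gamma_n|^2$ is exactly the second sum in \eqref{E:FormulaForHbNormHolNeighborhood}. Morally $g = P_+(\overline{\phi}f)$, since projecting $\overline{\phi}f$ onto the non-negative frequencies produces precisely the convolution coefficients $\gamma_n$.

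First I would settle the convergence assertions. Because $f$ extends holomorphically past $\overline{\bD}$, its coefficients decay geometrically, $|a_k| \leq C\rho^{-k}$ for some $\rho > 1$; because $\phi \in N^+$ is holomorphic on $\bD$, its Taylor series has radius of convergence at least $1$, so $|c_j| \leq C_\epsilon (1+\epsilon)^j$ for every $\epsilon > 0$. Choosing $\epsilon < \rho - 1$ turns $\sum_j |a_{j+n}||c_j|$ into a convergent geometric series, which proves the absolute convergence claimed in the statement and yields $|\gamma_n| = O(\rho^{-n})$; in particular $g$ is itself holomorphic in a neighbourhood of $\overline{\bD}$, so $(\gamma_n) \in \ell^2$ and $g \in H^2$.

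The key step is to verify the operator identity $T_{\overline{a}} g = T_{\overline{b}} f$. The tempting route, namely $T_{\overline{b}} = T_{\overline{a}}T_{\overline{\phi}}$ via the product rule for Toeplitz operators with $\overline{a} \in \overline{H^\infty}$, is delicate here because $\phi = b/a$ need not lie in $L^2(\bT)$, so $\overline{\phi}f$ may fail to be integrable and $T_{\overline{\phi}}$ is not literally defined on $f$. I would therefore avoid $\overline{\phi}f$ entirely and test the identity weakly: using $T_{\overline{a}}^* = T_a$, $T_{\overline{b}}^* = T_b$ and $a, b \in H^\infty$, it suffices to prove $\langle g, a z^m\rangle_2 = \langle f, b z^m\rangle_2$ for every $m \geq 0$. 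Writing $a = \sum_p \alpha_p z^p$ and expanding the coefficients of $b = a\phi$ as $\beta_p = \sum_{i+j=p}\alpha_i c_j$, both sides unfold into the same double sum $\sum_{i,j\geq 0}\overline{\alpha_i}\,\overline{c}_j\, a_{i+j+m}$, whose absolute convergence (from the geometric estimates above) licenses the rearrangement. Since $a$ is outer, $T_{\overline{a}}$ is injective, so $g$ must coincide with the unique $f^+$ of Theorem~\ref{T:CharacterizationNonextremeHb}, and formula \eqref{E:FormulaForHbNormHolNeighborhood} follows.

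The main obstacle is exactly this product-rule subtlety: one cannot naively form $P_+(\overline{\phi}f)$ and invoke $T_{\overline{b}} = T_{\overline{a}} T_{\overline{\phi}}$, because $\phi$ can be unbounded and non-square-integrable on $\bT$. Replacing that heuristic with the direct coefficient computation, justified by the geometric decay of $(a_k)$ and the subgeometric growth of $(c_j)$, is what renders the argument rigorous.
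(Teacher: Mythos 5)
The paper itself contains no proof of this statement: it is quoted, with attribution, from \cite{GuillotChevrotRans2010}, so there is no internal argument to compare yours against. Judged on its own, your proof is correct and self-contained. The reduction to identifying $f^+$ from Theorem~\ref{T:CharacterizationNonextremeHb} is the right move; the geometric decay $|a_k|=O(\rho^{-k})$ (some $\rho>1$) together with $\limsup_j|c_j|^{1/j}\le 1$ does give absolute convergence of $\gamma_n=\sum_{j\ge0} a_{j+n}\overline{c}_j$ and of the double sum $\sum_{i,j}|\alpha_i|\,|c_j|\,|a_{i+j+m}|$; and your weak formulation --- testing $T_{\overline{a}}g=T_{\overline{b}}f$ against monomials via $\langle g,az^m\rangle_2=\langle f,bz^m\rangle_2$, using $T_{\overline{a}}^*=T_a$, $T_{\overline{b}}^*=T_b$ and density of polynomials in $H^2$ --- correctly sidesteps the genuine issue that $\overline{\phi}f$ need not be integrable on $\bT$, so one cannot naively write $f^+=T_{\overline{\phi}}f$ and invoke a product rule for Toeplitz operators. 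The final identification $g=f^+$ follows either from the uniqueness clause of Theorem~\ref{T:CharacterizationNonextremeHb} or, as you note, from injectivity of $T_{\overline{a}}$ (which holds because $a$ is outer, so $aH^2$ is dense in $H^2$ and $\ker T_{\overline{a}}=\ker T_a^*=(\overline{aH^2})^{\perp}=\{0\}$). A small bonus of your route: once $T_{\overline{b}}f=T_{\overline{a}}g$ with $g\in H^2$ is established, Theorem~\ref{T:CharacterizationNonextremeHb} by itself yields $f\in\cH(b)$, so the preliminary appeal to Theorem~\ref{T:densitypolynomialsAndNOnextreme} is not even needed.
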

		
	We remark two consequences of this result that will be useful in what follows.
	
	\begin{corollary}\label{C:R^n}
		Let $b$ be non-extreme and let $f\in\cH(b)$.
		Then, for each $R>1$, the partial sums of the Taylor series of $f$ satisfy $\|s_n[f]\|_b=O(R^n)$ as $n\to\infty$.
		\end{corollary}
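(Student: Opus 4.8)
The plan is to apply the explicit norm formula \eqref{E:FormulaForHbNormHolNeighborhood} directly to the partial sums $s_n[f]$. Although an arbitrary $f\in\cH(b)$ need not extend holomorphically past $\bD$, each $s_n[f]$ is a polynomial and hence holomorphic in a neighbourhood of $\overline{\bD}$, so the formula applies to it (and Theorem~\ref{T:densitypolynomialsAndNOnextreme} guarantees $s_n[f]\in\cH(b)$). Writing $f(z)=\sum_{k\ge0}a_kz^k$ and $\phi=b/a=\sum_{j\ge0}c_jz^j$, the Taylor coefficients of $s_n[f]$ are $a_k$ for $k\le n$ and $0$ otherwise, so \eqref{E:FormulaForHbNormHolNeighborhood} yields
\begin{equation*}
\Vert s_n[f]\Vert_b^2 = \sum_{k=0}^n |a_k|^2 + \sum_{m=0}^n \Big|\sum_{j=0}^{n-m} a_{j+m}\overline{c}_j\Big|^2 ,
\end{equation*}
the outer sum running only over $m\le n$, since the inner sum is empty once $m>n$.

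The first term is harmless: because $\cH(b)\subset H^2$ with $\Vert\cdot\Vert_2\le\Vert\cdot\Vert_b$ (Theorem~\ref{T:CharacterizationNonextremeHb}), we have $\sum_{k=0}^n|a_k|^2\le\Vert f\Vert_2^2\le\Vert f\Vert_b^2$, which is bounded in $n$. For the second term I would apply the Cauchy--Schwarz inequality to each inner sum and bound the $a$-factor $\sum_{j=0}^{n-m}|a_{j+m}|^2$ by $\Vert f\Vert_2^2$, giving
\begin{equation*}
\Big|\sum_{j=0}^{n-m} a_{j+m}\overline{c}_j\Big|^2 \le \Vert f\Vert_2^2 \sum_{j=0}^{n} |c_j|^2 ,
\end{equation*}
so that the whole second term is at most $(n+1)\Vert f\Vert_2^2\sum_{j=0}^n|c_j|^2$. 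It remains to control the growth of $\sum_{j=0}^n|c_j|^2$. Here is where non-extremeness enters: since $a$ is outer, it is zero-free on $\bD$, so $\phi=b/a$ is holomorphic on all of $\bD$ and its Taylor series has radius of convergence at least $1$. Hence $\limsup_{j}|c_j|^{1/j}\le1$, and therefore for every $\varepsilon>0$ there is a constant $C_\varepsilon$ with $\sum_{j=0}^n|c_j|^2\le C_\varepsilon(1+\varepsilon)^{2n}$.

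Combining these estimates gives $\Vert s_n[f]\Vert_b^2=O\bigl(n\,(1+\varepsilon)^{2n}\bigr)$, hence $\Vert s_n[f]\Vert_b=O\bigl(\sqrt{n}\,(1+\varepsilon)^{n}\bigr)$. Given any $R>1$, choosing $\varepsilon>0$ with $1+\varepsilon<R$ makes $\sqrt{n}\,(1+\varepsilon)^n/R^n\to0$, so $\Vert s_n[f]\Vert_b=O(R^n)$, as claimed. I do not expect a serious obstacle: the only substantive input is the norm formula \eqref{E:FormulaForHbNormHolNeighborhood}, and the one step meriting a little care is the coefficient-growth estimate for $\phi=b/a$, which is exactly the place where the existence and holomorphy on $\bD$ of the Pythagorean companion $a$ is used.
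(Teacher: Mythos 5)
Your proof is correct and follows essentially the same route as the paper: both apply the explicit norm formula \eqref{E:FormulaForHbNormHolNeighborhood} to the polynomial $s_n[f]$ and estimate the two resulting sums, the only substantive ingredient being that $\phi=b/a$ is holomorphic on $\bD$ so its coefficients grow subexponentially. Your estimates are in fact slightly sharper (you exploit $\sum_k|a_k|^2\le\|f\|_2^2$ where the paper just uses $a_j=O(S^j)$), but this makes no difference to the conclusion.
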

		
		\begin{proof}
		Fix $S$ with $1<S^2<R$.
		Let $(b,a)$ be the Pythagorean pair corresponding to $b$, and let $\phi:=b/a$. 
		Since both $f$ and $\phi$ are holomorphic on~$\bD$, 
		their respective Taylor coefficients satisfy $a_j=O(S^j)$ and $c_j=O(S^j)$ as $j\to\infty$. 
		Feeding this information into the formula 
		\eqref{E:FormulaForHbNormHolNeighborhood}, applied to $s_N[f]$ in place of $f$, we obtain
		\[
		\|s_N[f]\|_b^2=O((N+1)S^{2N})+O((N+1)^3S^{4N}) \quad(N\to\infty).
		\]
		This implies that $\|s_N[f]\|_b=O(R^N)$.
	\end{proof}
	
		\begin{corollary}\label{C:NormEstimateDilatesInHb}
		Let $(b, a)$ be a pair, and let $\phi := b/a$, say $\phi (z) = \sum_{j \geq 0} c_n z^n$. 
		Let $f(z) = \sum_{n \geq 0} a_n z^n$ be a function in $H^2$ and let $r \in [0, 1)$. 
		Then there is a constant $C(\phi, r)$, which depends only on $\phi$ and $r$, such that
			\begin{align*}
			\Vert f_r \Vert_{b}^2 \leq C (\phi , r) \Vert f \Vert_{H^2}^2 .
			\end{align*}
		The dependence $r \mapsto C(\phi, r)$ can be chosen to be increasing.
		\end{corollary}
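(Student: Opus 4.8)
The plan is to apply the norm formula \eqref{E:FormulaForHbNormHolNeighborhood} directly to the dilate $f_r$. Since $f \in H^2$ is holomorphic on $\bD$ and $r \in [0,1)$, the dilate $f_r(z) = f(rz)$ is holomorphic on the disk of radius $1/r > 1$, hence in a neighbourhood of $\overline{\bD}$; thus the formula applies to $f_r$, whose Taylor coefficients are $a_n r^n$. This gives
$$
\Vert f_r\Vert_b^2 = \sum_{n\geq 0}|a_n|^2 r^{2n} + \sum_{n\geq 0}\Big\lvert \sum_{j\geq 0} a_{j+n}\, r^{j+n}\,\overline{c}_j\Big\rvert^2 .
$$
The first sum is clearly at most $\Vert f\Vert_{H^2}^2$, so the real work is to bound the second.

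For the second sum I would factor $r^n$ out of the inner sum and apply the Cauchy--Schwarz inequality with the weight $r^j$ split as $r^{j/2}\cdot r^{j/2}$, yielding
$$
\Big\lvert \sum_{j\geq 0} a_{j+n}\, r^{j}\,\overline{c}_j\Big\rvert^2 \leq \Big(\sum_{j\geq 0}|a_{j+n}|^2 r^j\Big)\Big(\sum_{j\geq 0}|c_j|^2 r^j\Big).
$$
Here the factor $M(\phi,r):=\sum_{j\geq 0}|c_j|^2 r^j$ is finite: since $\phi\in N^+$ is holomorphic on $\bD$, its dilate $\phi_{\sqrt r}$ is holomorphic in a neighbourhood of $\overline{\bD}$ and lies in $H^2$, so that $M(\phi,r)=\Vert\phi_{\sqrt r}\Vert_{H^2}^2<\infty$. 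Note that $M(\phi,r)$ depends only on $\phi$ and $r$, as required.

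Summing over $n$, the remaining double sum $\sum_{n\geq 0}\sum_{j\geq 0} r^{2n}r^j|a_{j+n}|^2$ is reorganized via the substitution $k=n+j$ (legitimate since all terms are nonnegative) into $\sum_{k\geq 0}|a_k|^2 r^k\sum_{n=0}^{k} r^n$, where the inner geometric sum is bounded by $1/(1-r)$. This produces
$$
\sum_{n\geq 0}\Big\lvert \sum_{j\geq 0} a_{j+n}\, r^{j+n}\,\overline{c}_j\Big\rvert^2 \leq \frac{M(\phi,r)}{1-r}\,\Vert f\Vert_{H^2}^2 ,
$$
and hence $\Vert f_r\Vert_b^2 \leq C(\phi,r)\Vert f\Vert_{H^2}^2$ with the explicit choice $C(\phi,r):=1+M(\phi,r)/(1-r)$. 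The monotonicity is then immediate: both $M(\phi,r)$ (a power series with nonnegative coefficients) and $1/(1-r)$ are increasing in $r$, so $C(\phi,r)$ is increasing.

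The computation is essentially a weighted Cauchy--Schwarz estimate followed by an interchange in the order of summation, and I expect no serious obstacle. The only two points requiring genuine care are the finiteness of $M(\phi,r)$ — where one must use that $\phi$ is holomorphic on the whole of $\bD$, rather than merely that it is a ratio $b/a$ — and the justification of the summation interchange, which is valid by Tonelli's theorem since every term is nonnegative.
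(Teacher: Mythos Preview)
Your proof is correct and follows essentially the same route as the paper: apply the norm formula \eqref{E:FormulaForHbNormHolNeighborhood} to $f_r$, bound the first sum trivially, and handle the second sum via Cauchy--Schwarz with the weight $r^{j+n}$ split evenly, arriving at the identical constant $C(\phi,r)=1+\bigl(\sum_{j\ge0}r^j|c_j|^2\bigr)/(1-r)$. The only cosmetic difference is in bounding the remaining double sum $\sum_{n}r^n\sum_{k\ge n}r^k|a_k|^2$: the paper recognizes the inner sum as $\|(S^*)^n f_{\sqrt r}\|_{H^2}^2$ and uses $\|S^*\|\le 1$, whereas you interchange the order of summation directly via Tonelli; these are equivalent computations.
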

		\begin{proof}
		Since $f_r (z) := \sum_{n \geq 0} a_n r^n z^n$, using formula \eqref{E:FormulaForHbNormHolNeighborhood}, we get
			\begin{align*}
			\Vert f_r \Vert_b^2 &= \sum_{n \geq 0} r^2 |a_n|^2 + \sum_{n \geq 0} \Big\lvert \sum_{j \geq 0} r^{j + n} a_{j + n} \overline{c}_j \Big\rvert^2 \\
			& \leq \Vert f \Vert_{H^2}^2 + \sum_{n \geq 0} \Big( \sum_{j \geq 0} r^{j + n} |a_{j + n}| |c_j| \Big)^2.
			\end{align*}
		By the Cauchy-Schwarz inequality, we obtain
			\begin{align*}
			\Vert f_r \Vert_b^2 & \leq \Vert f \Vert_{H^2}^2 + \sum_{n \geq 0} \Big( \sum_{j \geq 0} r^{j + n} |a_{j + n}|^2 \Big) \Big( \sum_{j \geq 0} r^{j + n} |c_j|^2 \Big) \\
			&= \Vert f \Vert_{H^2}^2 + \sum_{n \geq 0} r^n \Vert (S^*)^n (f_{\sqrt{r}}) \Vert_{H^2}^2 \Big( \sum_{j \geq 0} r^j |c_j|^2 \Big) ,
			\end{align*}
		where $S^*$ is the backward shift operator on $H^2$ and $(S^*)^n$ is the $n$-fold composition of $S^*$. Since $\Vert S^* \Vert \leq 1$, we get
			\begin{align*}
			\Vert f_r \Vert_b^2 \leq \Vert f \Vert_{H^2}^2 + \frac{\sum_{n \geq 0} r^j |c_j|^2}{1 - r} \Vert f_{\sqrt{r}} \Vert_{H^2}^2 ,
			\end{align*}
		where $\sum_{j \geq 0} r^j |c_j|^2 < \infty$ since $\phi \in \hol (\bD )$. Finally, since $\Vert f_{\sqrt{r}} \Vert_{H^2}^2 \leq \Vert f \Vert_{H^2}^2$, we obtain
			$$
			\Vert f_r \Vert_b^2 \leq C (\phi , r ) \Vert f \Vert_{H^2}^2,
			$$
		as desired, where $C(\phi , r ) := 1 + \sum_{n \geq 0} r^j |c_j|^2 / (1 - r)$. It is easy to see from the definition of the constant $C(\phi , r)$ that the function $r \mapsto C(\phi , r)$ is increasing on $[0, 1)$.
		\end{proof}

	We end this section by stating an estimate obtained in the core of the proofs of Theorem 3.1 and Theorem 3.6 in \cite{El-Fallah2016}. 
	This example plays a central role in \cite{El-Fallah2016}, and will be equally important for us here.
	The exact choices of $b$ and $f \in \mathcal{H} (b)$ do not matter in our situation, so we will just state the estimate as follows.
	
		\begin{theorem}\cite[Theorems 3.1 and 3.6]{El-Fallah2016}.\label{C:EstimesDilatesHb}
		There exist a non-extreme $b$ and a function $f \in \mathcal{H} (b)$ such that $(f_r)^+(0)$ is non-negative for all $r \in [0, 1)$ and, moreover, $(f_r)^+ (0) \ra +\infty$ as $r \ra 1^-$.
		\end{theorem}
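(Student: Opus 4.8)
The plan is to reduce the whole statement to a single explicit scalar quantity and then engineer the symbol $b$ and the function $f$ so that this quantity has the desired sign and growth. Write the Pythagorean pair as $(b,a)$, set $\phi := b/a = \sum_{j\ge0} c_j z^j$, and seek $f(z) = \sum_{n\ge0} a_n z^n$ in $\cH(b)$. For each fixed $r\in[0,1)$ the dilate $f_r$ is holomorphic in a neighbourhood of $\overline{\bD}$, so Theorem~\ref{T:CharacterizationNonextremeHb} together with formula~\eqref{E:FormulaForHbNormHolNeighborhood} applies to $f_r$ and identifies the Taylor coefficients of $(f_r)^+$. Reading off the constant term, I expect
	\begin{equation*}
	(f_r)^+(0) = \sum_{j\ge0} a_j \overline{c_j}\, r^j .
	\end{equation*}
Everything then comes down to controlling this power series in $r$.

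For the two required properties I would impose sign conditions on the coefficients. If I arrange that $a_j \overline{c_j}\ge0$ for every $j$ --- for instance by taking both the $a_j$ and the $c_j$ to be real and nonnegative --- then $(f_r)^+(0)\ge0$ for all $r\in[0,1)$, which is the first assertion. For the divergence I would additionally arrange that $\sum_{j\ge0} a_j\overline{c_j} = +\infty$. Since the terms are nonnegative and $r\mapsto\sum_{j\ge0} a_j\overline{c_j}\,r^j$ is increasing, the monotone convergence theorem then gives $\lim_{r\to1^-}(f_r)^+(0) = +\infty$, which is the second assertion. Note that this immediately forces $\|f_r\|_b^2 \ge |(f_r)^+(0)|^2 \to\infty$, so the estimate is exactly the mechanism behind the divergence of the dilates in $\cH(b)$.

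The real work --- and the main obstacle --- is to produce a single pair $(b,a)$ and a single $f$ for which all three conditions ($f\in\cH(b)$, the sign condition, and the divergent pairing $\sum_j a_j\overline{c_j}=+\infty$) hold simultaneously. The tension is that the divergence of $\sum_j a_j\overline{c_j}$ looks incompatible with $f^+\in H^2$: for a function holomorphic across $\bT$ this sum would, by~\eqref{E:FormulaForHbNormHolNeighborhood}, converge to the finite constant term of $f^+$. The resolution the construction must exploit is that $\phi=b/a$ lies only in the Smirnov class $N^+$ and need not belong to $H^2$, so the coefficient formula is valid only for the dilates $f_r$ (which are holomorphic in a neighbourhood of $\overline{\bD}$) and not for $f$ itself; thus $f$ is chosen to be a genuine boundary element of $\cH(b)$ rather than one holomorphic across $\bT$. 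Concretely, I would take an outer $a$ with a controlled boundary singularity, so that the $c_j$ are nonnegative and $\sum_j a_j c_j$ diverges for a suitable nonnegative square-summable sequence $(a_n)$, let $b$ be outer with $|a|^2+|b|^2=1$ on $\bT$, and then verify $f\in\cH(b)$ directly from Theorem~\ref{T:CharacterizationNonextremeHb} by checking $T_{\overline b}f\in T_{\overline a}H^2$ and estimating $\|f^+\|_2$. Balancing the decay of $(a_n)$ against the growth of $(c_j)$ so that $f^+\in H^2$ while the constant-term pairing still diverges is the delicate heart of the argument.
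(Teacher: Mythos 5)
The paper itself does not prove this statement: it is imported wholesale from Theorems 3.1 and 3.6 of the cited work of El-Fallah et al., and the authors explicitly decline to reproduce the construction. Measured against that source, your reduction follows essentially the right strategy: one does write $(f_r)^+(0)=\sum_{j\ge0}a_j\overline{c_j}\,r^j$ for $\phi=b/a=\sum_j c_jz^j$, impose nonnegativity of the products $a_j\overline{c_j}$, and obtain the blow-up from $\sum_j a_j\overline{c_j}=+\infty$ by monotonicity in $r$. Your identification of the constant term of $(f_r)^+$ is correct (with the small caveat that the identity \eqref{E:FormulaForHbNormHolNeighborhood} as quoted only gives $\Vert f^+\Vert_2$; reading off the individual coefficients $\sum_j a_{j+n}\overline{c_j}$ of $f^+$ uses the slightly stronger statement that the proof of that formula actually establishes). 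Your diagnosis of why $\sum_j a_j\overline{c_j}=\infty$ is compatible with $f^+\in H^2$ --- namely that $T_{\overline{\phi}}$ is only a formal operator when $\phi\notin H^2$, so $(f_r)^+(0)$ need not converge to $f^+(0)$ --- is also correct, and is exactly the mechanism by which $f_r\not\to f$ in $\cH(b)$.

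The genuine gap is that the entire content of the theorem is the \emph{existence} of a pair $(b,a)$ and an $f\in\cH(b)$ satisfying all three constraints simultaneously, and your proposal stops precisely there, labelling it ``the delicate heart of the argument.'' This cannot be waved through as routine balancing. First, one must take $\phi\in N^+\setminus H^2$: if $\phi\in H^2$ and all coefficients are nonnegative, then Cauchy--Schwarz gives $\sum_j a_jc_j\le\Vert f\Vert_2\Vert\phi\Vert_2<\infty$ for every $f\in H^2$, and the plan dies immediately. Second, and harder, one must prove that for a suitable such $\phi$ the space $\cH(b)$ really does contain a nonnegative-coefficient $f$ with $\sum_j a_jc_j=\infty$ --- that is, that the membership condition $T_{\overline{b}}f\in T_{\overline{a}}H^2$ does not silently force convergence of the pairing. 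Producing such a $\phi$ and $f$ (and verifying $f\in\cH(b)$) is the substance of the proof in the cited source, and none of it appears in your write-up. As it stands, you have a correct and well-motivated reduction of the statement to a concrete construction, not a proof of the statement.
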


\section{Divergence of the logarithmic means}\label{SS:DivergenceOfLogInHb}
	To prove that the logarithmic means diverge in the $\cH (b)$-norm, we use another expression of the logarithmic means in terms of the classical Abel means or, equivalently, in terms of the dilates $f_r$ of a function $f \in \hol (\bD )$.
	
	The first step toward this formula is the following lemma.
	
	\begin{lemma}\label{L:ContinuityOfdilatesHb}
	Let $b$ be non-extreme, let $f \in \mathcal{H} (b)$ and let $r \in (0, 1)$. The application $F : [0, r] \ra \mathcal{H} (b)$, defined by $F(t) := f_t$, is continuous from $[0, r]$ into $\mathcal{H} (b)$.
	\end{lemma}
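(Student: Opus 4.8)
The plan is to deduce continuity from a uniform bound on the dilation operators together with the density of polynomials. The key point, which I would isolate first, is that on the compact interval $[0,r]$ the maps $h \mapsto h_t$ are uniformly bounded from $\cH(b)$ into itself. Indeed, writing $(b,a)$ for the Pythagorean pair of $b$ and $\phi := b/a$, Corollary~\ref{C:NormEstimateDilatesInHb} gives $\Vert h_t \Vert_b^2 \le C(\phi, t) \Vert h \Vert_{H^2}^2$; since $t \mapsto C(\phi, t)$ is increasing we may replace $C(\phi,t)$ by $C(\phi,r)$ for every $t \in [0,r]$, and since $\Vert h \Vert_{H^2} \le \Vert h \Vert_b$ (by Theorem~\ref{T:CharacterizationNonextremeHb}), we arrive at
\[
\Vert h_t \Vert_b \le \sqrt{C(\phi, r)}\,\Vert h \Vert_b \qquad (h \in \cH(b),\ t \in [0, r]).
\]

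Next I would verify the statement for polynomials, where it is transparent. If $p(z) = \sum_{k=0}^d \alpha_k z^k$, then $p_t = \sum_{k=0}^d \alpha_k t^k z^k$ is a fixed finite combination of the monomials $z^k$, each of which lies in $\cH(b)$ by Theorem~\ref{T:densitypolynomialsAndNOnextreme}; hence $\Vert p_t - p_{t_0} \Vert_b \le \sum_{k=0}^d |\alpha_k|\,|t^k - t_0^k|\,\Vert z^k \Vert_b \ra 0$ as $t \ra t_0$, so $t \mapsto p_t$ is continuous into $\cH(b)$.

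Finally I would combine the two ingredients by a standard three-term estimate. Given $\varepsilon > 0$ and $t_0 \in [0,r]$, use the density of polynomials (Theorem~\ref{T:densitypolynomialsAndNOnextreme}) to choose $p$ with $\Vert f - p \Vert_b$ as small as we like, and write $f_t - f_{t_0} = (f - p)_t - (f - p)_{t_0} + (p_t - p_{t_0})$. The first two terms are controlled uniformly in $t$ by the displayed bound applied to $h = f - p$, while the last is controlled by the polynomial case once $|t - t_0|$ is small. I expect the only real obstacle to be precisely this uniformity in $t$ of the estimate for $\Vert (f-p)_t \Vert_b$: a crude bound would produce a constant that degenerates as $t$ varies, and it is the monotonicity of $r \mapsto C(\phi, r)$ recorded in Corollary~\ref{C:NormEstimateDilatesInHb} that saves the argument. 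Everything else is routine.
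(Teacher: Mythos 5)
Your proof is correct, but it follows a different route from the paper's. The paper proves the lemma by expanding the dilate as the Abel mean of the partial sums, $f_t=(1-t)\sum_{n\ge0}s_n[f]\,t^n$, and invoking Corollary~\ref{C:R^n} ($\|s_n[f]\|_b=O(R^n)$, with $Rr<1$) to see that this series converges absolutely and uniformly in $\cH(b)$-norm for $t\in[0,r]$; continuity of $F$ is then the continuity of a uniform limit of the continuous maps $t\mapsto(1-t)t^n s_n[f]$. You instead use the uniform operator bound $\|h_t\|_b\le\sqrt{C(\phi,r)}\,\|h\|_b$ on $[0,r]$ (from Corollary~\ref{C:NormEstimateDilatesInHb}, the monotonicity of $C(\phi,\cdot)$, and $\|h\|_{H^2}\le\|h\|_b$), the transparent case of polynomials, and density of polynomials (Theorem~\ref{T:densitypolynomialsAndNOnextreme}) via a three-term estimate; since $(f-p)_t=f_t-p_t$, every step checks out, and you correctly identify that the monotonicity of $C(\phi,\cdot)$ is what makes the bound uniform in $t$. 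The trade-off is this: your argument is a standard equicontinuity-plus-density scheme and needs neither Corollary~\ref{C:R^n} nor the series representation, but the paper's proof earns its keep later --- the uniform $\cH(b)$-convergence of $(1-t)\sum_n s_n[f]t^n$ on $[0,r]$ is exactly what justifies interchanging sum and Bochner integral in Lemma~\ref{L:IntegralFormLogMeansHb}, so the paper gets that identity for free, whereas your route would still require it to be established separately.
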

	\begin{proof}
	By Corollary~\ref{C:R^n}, 
	for any $R > 1$ there exists a finite positive constant $C > 0$, which depends on $R$ and $f$, such that
		\begin{align*}
		\Vert s_n [f] \Vert_b \leq C R^n .
		\end{align*}
	By choosing $R$ so that $Rr < 1$, we can ensure that the series 
	$$
	(1 - t) \sum_{n \geq 0} s_n[f] t^n \quad (0 \leq t \leq r)
	$$
	converges in $\cH (b)$-norm to some function $g^{(t)} \in \cH (b)$ on $[0, r]$. The dilates $f_t$ can be expressed pointwise as
		\begin{align*}
		f_t (z) = (1 - t) \sum_{n \geq 0} s_n[f] (z) t^n \quad (z \in \bD ),
		\end{align*}
	and therefore we must have that $g^{(t)} = f_t$, since convergence in $\cH (b)$ implies pointwise convergence. 
	
	The continuity of $F$ now follows from the scalar continuity of each map $t \mapsto t^n (1 - t)$ on $[0, r]$, $n \geq 0$.
	\end{proof}	
	
	For the next lemma, we show that the integral formula \eqref{E:IntegralFormulaLogAndAbel} linking the logarithmic means to the Abel means is also valid in $\cH (b)$. We use the Bochner integral as the definition of the vector-valued integral. For background on this topic, we refer the reader to \cite[Chapter 3, \S 1]{HillePhillips1957}.
	\begin{lemma}\label{L:IntegralFormLogMeansHb}
	Let $b$ be non-extreme and let $f \in \cH (b)$, say $f(z) = \sum_{n \geq 0} a_n z^n$. Then, for each $r \in [0, 1)$, we have $L_r[f] \in \cH (b)$ and
		\begin{align*}
		L_r[f] = \frac{1}{\log \frac{1}{1 - r}} \int_0^r \frac{f_t}{1 - t} \, dt .
		\end{align*}
	\end{lemma}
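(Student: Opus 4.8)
The plan is to promote the scalar pointwise identity \eqref{E:IntegralFormulaLogAndAbel} to a vector-valued identity in $\cH(b)$, using the continuity established in Lemma~\ref{L:ContinuityOfdilatesHb}. Fix $r \in [0,1)$. The first step is to check that the $\cH(b)$-valued integrand is well behaved: by Lemma~\ref{L:ContinuityOfdilatesHb} the map $t \mapsto f_t$ is continuous from $[0,r]$ into $\cH(b)$, and the scalar factor $t \mapsto 1/(1-t)$ is continuous and bounded on $[0,r]$ since $r<1$, so $t \mapsto f_t/(1-t)$ is continuous from the compact interval $[0,r]$ into the Hilbert space $\cH(b)$. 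A continuous function from a compact interval into a Banach space is Bochner integrable, so
\[
g := \frac{1}{\log \frac{1}{1-r}} \int_0^r \frac{f_t}{1-t} \, dt
\]
is a well-defined element of $\cH(b)$.

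It then remains to identify $g$ with $L_r[f]$. For this I would use that the point-evaluation functionals are bounded on $\cH(b)$: since $\Vert h \Vert_b \geq \Vert h \Vert_2$ for every $h \in \cH(b)$ by Theorem~\ref{T:CharacterizationNonextremeHb}, and point evaluations are bounded on $H^2$, each map $\delta_z : h \mapsto h(z)$ is a bounded linear functional on $\cH(b)$. Since a bounded linear functional commutes with the Bochner integral, for every $z \in \bD$ we get
\[
g(z) = \delta_z(g) = \frac{1}{\log \frac{1}{1-r}} \int_0^r \delta_z\!\left( \frac{f_t}{1-t} \right) dt = \frac{1}{\log \frac{1}{1-r}} \int_0^r \frac{f_t(z)}{1-t} \, dt,
\]
and by the scalar identity \eqref{E:IntegralFormulaLogAndAbel} the right-hand side equals $L_r[f](z)$. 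Thus $g$ and $L_r[f]$ are both holomorphic on $\bD$ and agree at every point, hence coincide as holomorphic functions. Since $g \in \cH(b)$ by construction, this simultaneously establishes that $L_r[f] \in \cH(b)$ and the claimed formula.

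I do not expect a serious obstacle here, because the genuinely analytic work is already packaged in Lemma~\ref{L:ContinuityOfdilatesHb} and in the scalar formula \eqref{E:IntegralFormulaLogAndAbel}. The two points requiring care are the well-definedness of the Bochner integral, which follows from continuity on a compact interval, and the interchange of the point-evaluation functional with the integral, which is the standard fact that bounded functionals pass through Bochner integrals. The one mild conceptual subtlety is that $L_r[f]$ is a priori only a holomorphic function and not manifestly a member of $\cH(b)$; the argument circumvents this by constructing $g \in \cH(b)$ first and recovering $L_r[f] = g$ from pointwise agreement.
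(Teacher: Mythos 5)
Your proof is correct, but the identification step is carried out differently from the paper's. Both arguments begin identically: Lemma~\ref{L:ContinuityOfdilatesHb} makes $t \mapsto f_t/(1-t)$ continuous from $[0,r]$ into $\cH(b)$, hence Bochner integrable. From there the paper works at the level of the series: it uses Corollary~\ref{C:R^n} to show that $\frac{r}{\log\frac{1}{1-r}}\sum_{n\ge0}\frac{s_n[f]}{n+1}r^n$ converges \emph{absolutely in $\cH(b)$-norm} (which already gives $L_r[f]\in\cH(b)$), then writes $\frac{r^{n+1}}{n+1}=\int_0^r t^n\,dt$, interchanges the sum with the Bochner integral, and resums $\sum_n s_n[f]\,t^n = f_t/(1-t)$ inside the integral. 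You instead define $g$ as the Bochner integral first, push the bounded point-evaluation functionals $\delta_z$ (bounded because $\Vert h\Vert_2\le\Vert h\Vert_b$ and evaluations are bounded on $H^2$) through the integral, and invoke the already-proved scalar identity \eqref{E:IntegralFormulaLogAndAbel} to conclude $g=L_r[f]$ pointwise, hence as elements of $\cH(b)$. Your route is shorter and cleanly delegates all series manipulation to the scalar lemma; the paper's route yields as a by-product the absolute $\cH(b)$-norm convergence of the series defining $L_r[f]$, a slightly stronger intermediate fact, at the cost of redoing the sum--integral interchange in the Banach-space setting. Every step you use (Hille's theorem for bounded functionals and Bochner integrals, boundedness of $\delta_z$ on $\cH(b)$, the identity theorem for holomorphic functions) is standard and correctly applied, so there is no gap.
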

	\begin{proof}
	Fix $r \in [0, 1)$. From Lemma \ref{L:ContinuityOfdilatesHb}, the function $t \mapsto \frac{f_t}{1 - t}$ is continuous from $[0, r]$ into $\cH (b)$, and so its Bochner integral is well-defined.

	By Corollary~\ref{C:R^n} again, 
	for any $R > 1$, we have that $\Vert s_n [f] \Vert_{b} \leq C R^n$, where $C$ is a positive constant depending only on $R$ and $f$. Let $R > 1$ be chosen so that $R r < 1$. 
	
	Firstly, we compute an upper bound for the series defining $L_r[f]$: 
		\begin{align*}
		\frac{r}{\log \frac{1}{1-r}} \sum_{n \geq 0} \frac{\Vert s_n [f] \Vert_b}{n + 1} r^{n} \leq C \frac{r }{\log \frac{1}{1 - r}} \sum_{n \geq 0} \frac{(rR)^n}{n + 1} = C \frac{\log \frac{1}{1 - rR}}{R \log \frac{1}{1 - r}} .
		\end{align*}
	Therefore the series $\frac{r}{\log \frac{1}{1 - r}} \sum_{n \geq 0} \frac{s_n [f]}{n + 1} r^n$ converges absolutely in $\cH (b)$ and it defines a function in $\cH (b)$, say $g^{(r)} \in \cH (b)$. Since convergence in $\cH (b)$ implies pointwise convergence, we must also have
		\begin{align*}
		g^{(r)} (z) = \frac{r}{\log \frac{1}{1 - r}} \sum_{n \geq 0} \frac{s_n [f](z)}{n + 1} r^n \quad (z \in \bD ),
		\end{align*}
	which gives $g^{(r)} = L_r[f] \in \cH (b)$. 
	
	Secondly, we have
		\begin{align*}
		\frac{r}{\log \frac{1}{1-r}} \sum_{n \geq 0} \frac{s_n[f]}{n + 1} r^{n} = \frac{1}{\log \frac{1}{1 - r}} \sum_{n \geq 0} \int_0^r s_n [f] t^n \, dt .
		\end{align*}
	The series $\sum_{n \geq 0} s_n [f] t^n$ is absolutely and uniformly convergent in $\cH (b)$ on $[0, r]$. Therefore, the order of summation and integration can be interchanged and we get
		\begin{align*}
		L_r[f] = \frac{1}{\log \frac{1}{1 - r}} \int_0^r \sum_{n \geq 0} s_n [f] t^n \, dt = \frac{1}{\log \frac{1}{1 - r}} \int_0^r \frac{f_t}{1 - t} \, dt ,
		\end{align*}
	where the last equality comes from the fact that $f_t = (1 - t) \sum_{n \geq 0} s_n [f] t^n$.
	\end{proof}
	Using this last lemma and the estimate in Corollary \ref{C:NormEstimateDilatesInHb}, we obtain the following result. We consider the logarithmic means $L_r$ as linear operators on $\cH (b)$ defined by $L_r(f) := L_r[f]$ for each $f \in \cH (b)$.
	
		\begin{corollary}\label{C:LogmeansBoundedOpHb}
		Let $b $ be non-extreme. Then, for every $r \in [0, 1)$, the linear map $L_r : \mathcal{H} (b) \ra \mathcal{H} (b)$ is a bounded linear operator with $\|L_r\|\le \sqrt{C(\phi,r)}$,
		where $C (\phi , r)$ is the same constant as in Corollary \ref{C:NormEstimateDilatesInHb}. In fact we even have
		\begin{align}\label{E:H2est}
			\Vert L_r (f) \Vert_b \leq \sqrt{C(\phi , r)} \Vert f \Vert_{H^2}  \quad (f \in \mathcal{H}(b)).
			\end{align}
		\end{corollary}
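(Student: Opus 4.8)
The plan is to combine the integral representation of $L_r[f]$ from Lemma~\ref{L:IntegralFormLogMeansHb} with the dilate estimate from Corollary~\ref{C:NormEstimateDilatesInHb}, and to exploit a fortunate cancellation between the normalizing factor $1/\log\frac{1}{1-r}$ and the scalar integral of $1/(1-t)$. Since everything reduces to one short estimate, the argument is essentially routine; the only point requiring care is the cancellation that makes the bound come out cleanly in terms of the $H^2$-norm.

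First I would fix $f \in \cH(b)$ and apply the fundamental norm inequality for the Bochner integral, namely $\Vert \int_0^r g(t)\,dt \Vert_b \le \int_0^r \Vert g(t) \Vert_b\,dt$, to the vector-valued integral furnished by Lemma~\ref{L:IntegralFormLogMeansHb}. This gives
$$
\Vert L_r[f] \Vert_b \le \frac{1}{\log \frac{1}{1-r}} \int_0^r \frac{\Vert f_t \Vert_b}{1-t}\,dt .
$$
Next I would invoke Corollary~\ref{C:NormEstimateDilatesInHb} to bound $\Vert f_t \Vert_b \le \sqrt{C(\phi,t)}\,\Vert f \Vert_{H^2}$, and then use the monotonicity of $t \mapsto C(\phi,t)$ on $[0,r]$ to replace $C(\phi,t)$ by the larger constant $C(\phi,r)$ for every $t \in [0,r]$. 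Pulling $\sqrt{C(\phi,r)}\,\Vert f \Vert_{H^2}$ out of the integral leaves the elementary scalar integral $\int_0^r \frac{dt}{1-t} = \log \frac{1}{1-r}$, which exactly cancels the normalizing factor, yielding
$$
\Vert L_r[f] \Vert_b \le \sqrt{C(\phi,r)}\,\Vert f \Vert_{H^2} ,
$$
which is precisely inequality~\eqref{E:H2est}.

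Finally, to pass from \eqref{E:H2est} to the operator-norm bound on $\cH(b)$, I would use that the $\cH(b)$-norm dominates the $H^2$-norm: indeed, by Theorem~\ref{T:CharacterizationNonextremeHb} one has $\Vert f \Vert_b^2 = \Vert f \Vert_2^2 + \Vert f^+ \Vert_2^2 \ge \Vert f \Vert_{H^2}^2$, so $\Vert f \Vert_{H^2} \le \Vert f \Vert_b$. Combining this with \eqref{E:H2est} gives $\Vert L_r[f] \Vert_b \le \sqrt{C(\phi,r)}\,\Vert f \Vert_b$ for all $f \in \cH(b)$, whence $L_r$ is bounded with $\Vert L_r \Vert \le \sqrt{C(\phi,r)}$. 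There is no genuine obstacle in the argument; the substantive content was already packaged into Lemma~\ref{L:IntegralFormLogMeansHb} and Corollary~\ref{C:NormEstimateDilatesInHb}, and the role of the monotonicity of $C(\phi,\cdot)$ is simply to permit the constant to be drawn outside the integral before the logarithmic factors cancel.
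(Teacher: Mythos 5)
Your proof is correct and follows essentially the same route as the paper: apply the Bochner-integral norm inequality to the representation from Lemma~\ref{L:IntegralFormLogMeansHb}, bound $\Vert f_t\Vert_b$ via Corollary~\ref{C:NormEstimateDilatesInHb}, use the monotonicity of $t\mapsto C(\phi,t)$ to extract the constant, and let $\int_0^r\frac{dt}{1-t}$ cancel the logarithmic normalization before invoking $\Vert f\Vert_{H^2}\le\Vert f\Vert_b$. No differences worth noting.
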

		
		\begin{proof}
		Let $r \in [0, 1)$ and $f \in \mathcal{H} (b)$. Then, from Corollary \ref{C:NormEstimateDilatesInHb}, we have
			\begin{align*}
			\frac{1}{\log \op \frac{1}{1 - r} \fp} \int_0^r \frac{\Vert f_t \Vert_b}{1 - t} \, dt \leq \frac{\Vert f \Vert_{H^2}}{\log \op \frac{1}{1 - r} \fp} \int_0^r \frac{ \sqrt{C (\phi , t)}}{1 - t} \, dt,
			\end{align*}
		and, since $t \mapsto C(\phi, t)$ is increasing on $[0, r)$, the above expression is
			\begin{align*}
			\leq \frac{C (\phi , r ) \Vert f \Vert_{H^2}}{\log \op \frac{1}{1 - r}\fp} \int_{0}^r \frac{1}{1 - t} \, dt \leq \sqrt{C(\phi , r )} \Vert f \Vert_{H^2}.
			\end{align*}
			In combination with Lemma~\ref{L:IntegralFormLogMeansHb}, this  establishes \eqref{E:H2est}. Since  $\|f\|_{H^2}\le\|f\|_b$,
			it follows that $L_r$ is a bounded operator on $\cH(b)$ with $\|L_r\|\le	\sqrt{C(\phi,r)}$.
			\end{proof}	
	
	Now we can exploit the integral formula of $L_r[f]$ to express $(L_r[f])^+$ in terms of a certain integral involving $(f_r)^+$.
		\begin{theorem}\label{T:FormulaForL_r[f]+}
		For any $f \in \mathcal{H} (b)$ and $r \in (0, 1)$, we have that
			\begin{align}
			(L_r[f])^+ = \frac{1}{\log \frac{1}{1-r}} \int_0^r \frac{(f_t)^+}{1 - t} \, dt . \label{E:relationLrPlus}
			\end{align}
		\end{theorem}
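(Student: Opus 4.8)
The plan is to recognize the map $f \mapsto f^+$ as a bounded linear operator from $\cH(b)$ into $H^2$, and then simply to apply it to the Bochner-integral representation of $L_r[f]$ already established in Lemma~\ref{L:IntegralFormLogMeansHb}. Since bounded linear operators commute with Bochner integrals, the identity \eqref{E:relationLrPlus} should drop out immediately.

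First I would verify that $\Lambda : \cH(b) \ra H^2$, defined by $\Lambda f := f^+$, is well defined, linear and bounded. By Theorem~\ref{T:CharacterizationNonextremeHb}, for $f \in \cH(b)$ the function $f^+$ is the \emph{unique} element of $H^2$ satisfying $T_{\overline{b}} f = T_{\overline{a}} f^+$. Linearity follows from this uniqueness together with the linearity of the Toeplitz operators: if $f, g \in \cH(b)$ and $\lambda \in \bC$, then $T_{\overline{a}}(f^+ + \lambda g^+) = T_{\overline{b}} f + \lambda T_{\overline{b}} g = T_{\overline{b}}(f + \lambda g)$, whence $(f + \lambda g)^+ = f^+ + \lambda g^+$ by uniqueness. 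Boundedness is read off the norm identity of the same theorem, which gives $\Vert f^+ \Vert_2^2 \leq \Vert f \Vert_2^2 + \Vert f^+ \Vert_2^2 = \Vert f \Vert_b^2$, so that $\Vert \Lambda f \Vert_2 \leq \Vert f \Vert_b$.

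Next I would recall that Lemma~\ref{L:IntegralFormLogMeansHb} expresses $L_r[f]$ as the $\cH(b)$-valued Bochner integral $\frac{1}{\log \frac{1}{1-r}} \int_0^r \frac{f_t}{1-t}\, dt$, whose integrand $t \mapsto f_t/(1-t)$ is continuous from $[0,r]$ into $\cH(b)$ by Lemma~\ref{L:ContinuityOfdilatesHb}. Applying the bounded operator $\Lambda$ and invoking the fact that a bounded linear operator between Banach spaces may be moved inside a Bochner integral (see \cite[Chapter 3, \S 1]{HillePhillips1957}), I would obtain $(L_r[f])^+ = \Lambda(L_r[f]) = \frac{1}{\log \frac{1}{1-r}} \int_0^r \frac{\Lambda f_t}{1-t}\, dt = \frac{1}{\log \frac{1}{1-r}} \int_0^r \frac{(f_t)^+}{1-t}\, dt$, which is precisely \eqref{E:relationLrPlus}. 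Here $f_t \in \cH(b)$ for each $t$, so $(f_t)^+$ is meaningful, and the right-hand integrand $t \mapsto (f_t)^+/(1-t)$ is continuous into $H^2$ (being $\Lambda$ composed with a continuous $\cH(b)$-valued map), so the integral on the right is itself a well-defined $H^2$-valued Bochner integral.

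The only point requiring care — rather than a genuine obstacle — is the commutation of $\Lambda$ with the Bochner integral across the two different spaces $\cH(b)$ and $H^2$. This is a standard property of the Bochner integral, valid precisely because $\Lambda$ is bounded and linear and the integrand is Bochner integrable; the boundedness established in the first step is exactly what makes it applicable. Everything else is bookkeeping once the operator $\Lambda$ and the integral formula of Lemma~\ref{L:IntegralFormLogMeansHb} are in hand.
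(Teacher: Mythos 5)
Your proof is correct, and it takes a slightly different (and arguably cleaner) route than the paper's. The paper does not introduce the operator $\Lambda f := f^+$ at all: instead it checks directly that $t \mapsto (f_t)^+/(1-t)$ is continuous into $H^2$ (so that the right-hand integral is a well-defined Bochner integral), applies the bounded operators $T_{\overline{a}}$ and $T_{\overline{b}}$ on $H^2$, commutes them with the respective integrals to obtain $T_{\overline{a}}\bigl(\tfrac{1}{\log\frac{1}{1-r}}\int_0^r \tfrac{(f_t)^+}{1-t}\,dt\bigr) = T_{\overline{b}}(L_r[f])$, and then concludes by the uniqueness clause of Theorem~\ref{T:CharacterizationNonextremeHb}. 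You instead package the relation $T_{\overline{b}}f = T_{\overline{a}}f^+$ once and for all into the statement that $f \mapsto f^+$ is a bounded linear map $\cH(b) \to H^2$ (with $\|f^+\|_2 \le \|f\|_b$ from the norm identity, and linearity from uniqueness plus injectivity of $T_{\overline{a}}$ for outer $a$), and then commute this single operator with the Bochner integral of Lemma~\ref{L:IntegralFormLogMeansHb}. What your approach buys is a one-step conclusion with no final appeal to uniqueness, plus a reusable fact about $\cH(b)$; what the paper's approach buys is that it never needs to verify boundedness of $f\mapsto f^+$ as a map between the two spaces, working only with the standard Toeplitz operators on $H^2$. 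Both arguments rest on the same two ingredients, namely Lemmas~\ref{L:ContinuityOfdilatesHb} and~\ref{L:IntegralFormLogMeansHb} and the commutation of a bounded linear operator with a Bochner integral, so there is no gap in what you wrote.
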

		\begin{proof}
		Define $F(t) := \frac{f_t}{1 - t}$. This is a continuous function from $[0, r]$ into $\mathcal{H} (b)$, by Lemma \ref{L:ContinuityOfdilatesHb}. The fact that $F(t)$ is continuous from $[0, r]$ into $\mathcal{H} (b)$, combined with the formula for the norm in $\cH (b)$ in terms of $f$ and $f^+$, imply that the mapping $ t \mapsto \frac{(f_t)^+}{1 - t}$ is continuous from $[0, r]$ into $H^2$. Therefore, it is Bochner-integrable on $[0, r]$. Thus, since $T_{\overline{a}} : H^2 \ra H^2$ and $T_{\overline{b}} : H^2 \ra H^2$ are bounded operators, we get
			\begin{align*}
			T_{\overline{a}} \op \op \log \frac{1}{1 - r} \fp^{-1} \int_0^r \frac{(f_t)^+}{1 - t} \, dt \fp &= \op \log \frac{1}{1 - r} \fp^{-1} \int_0^r \frac{T_{\overline{a}}(f_t)^+}{1 - t} \, dt \\
			&= \op \log \frac{1}{1 - r} \fp^{-1} \int_0^r \frac{T_{\overline{b}} f_t}{1 - t} \, dt \\
			&= T_{\overline{b}} \op \op \log \frac{1}{1 - r} \fp^{-1} \int_0^r \frac{f_t}{1 - t} \, dt \fp .
			\end{align*}
		Thus, by the uniqueness of $(L_r[f])^+$, we get formula \eqref{E:relationLrPlus}.
		\end{proof}
	
	Now, we can prove our main result.
	\begin{proof}[Proof of Theorem \ref{T:MainresultLogDivergence}]
	Choose $b$ and $f$ as in Theorem \ref{C:EstimesDilatesHb}. Let $A > 0$ and choose $r_0 \in (0, 1)$ so that 
		\begin{align*}
		(f_t)^+ (0) \geq A \quad (r_0 < t < 1) .
		\end{align*}
	Convergence in $H^2$ implies pointwise convergence on $\bD$. Therefore, by Theorem \ref{T:FormulaForL_r[f]+}, the following equality holds:
		\begin{align*}
		(L_r[f])^+ (0) = \frac{1}{\log \frac{1}{1 - r}} \int_0^r \frac{(f_t)^+ (0)}{1 - t} \, dt .
		\end{align*} 
	Fix $r \in [0, 1 )$. Splitting the integral in two parts, from $0$ to $r_0$ and from $r_0$ to $r$, we have
		\begin{align*}
		(L_r[f])^+ (0) \geq \frac{\log \frac{1 - r_0}{1 - r}}{\log \frac{1}{1 - r}} A + \frac{1}{\log \frac{1}{ 1- r}} \int_0^{r_0} \frac{(f_t)^+ (0)}{1 - t} \, dt .
		\end{align*}
	Taking the $\liminf$ as $r \ra 1^-$, we get
		\begin{align*}
		\liminf_{r \ra 1^-} (L_r[f])^+ (0) \geq A .
		\end{align*}
	Since $A$ was arbitrary, $\liminf_{r \ra 1^-} (L_r[f])^+ (0) = \infty$. By the expression of the norm of $L_r[f]$ in $\cH (b)$, it follows that
		\begin{align*}
		\Vert L_r[f] \Vert_b \geq |(L_r[f])^+ (0)|
		\end{align*}	
	which implies that $\liminf_{r \ra 1^-} \Vert L_r[f] \Vert_b = \infty$. This concludes the proof of the theorem.
	\end{proof}
	
	\section{Scalar-inclusion and summability in Banach spaces}\label{S:Scalar-inclusionSommBanachSpace}
Before analyzing the consequences of Theorem \ref{T:MainresultLogDivergence} on other power-series methods, we take a little detour to prove an abstract theorem on scalar-inclusion of two sequence-to-function summability methods. This theorem will be the main ingredient to prove Corollary \ref{C:Mainresult2AbelDivergence}.
	
	\begin{theorem}\label{T:scalarInclusionThenOperatorBounded}
	Let $K$ and $H$ be two regular sequence-to-function summability methods.
	Let $X$ and $Y$ be Banach spaces, and let $S : X \ra Y$ and $S_n : X \ra Y$ $(n \geq 0)$ be bounded linear operators.
	Suppose that:
	\begin{itemize}
	\item $S_n(x)\to S(x)$ for all $x\in W$, where $W$ is a dense subset of $X$;
	\item $(S_n(x))_{n\ge0}$ is $K$-summable to $S(x)$ for all $x\in X$;
	\item $K$ is scalar-included in $H$.
	\end{itemize}
	Then $(S_n(x))_{n\ge0}$ is $H$-summable to $S(x)$ for all $x\in X$.
	\end{theorem}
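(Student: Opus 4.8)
The plan is to reduce the vector-valued problem to the scalar inclusion by testing against functionals $\psi\in Y^{*}$, and then to upgrade the resulting weak information to norm convergence by two Banach--Steinhaus arguments anchored on the dense set $W$. Throughout, fix $x\in X$ and recall the two hypotheses that feed the scalar machine: $(S_n(x))_n$ is $K$-summable to $S(x)$ in $Y$, and $K\subseteq H$ scalarly.

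First I would record the scalar consequences. Since $(S_n(x))_n$ is $K$-summable to $S(x)$, the series $\sum_n k_n(r)S_n(x)$ converges in $Y$ for each $r$ and tends to $S(x)$ as $r\to R^-$; applying any $\psi\in Y^{*}$ and using its continuity, the scalar sequence $(\psi(S_n(x)))_n$ is $K$-summable to $\psi(S(x))$. The scalar-inclusion $K\subseteq H$ then gives that $(\psi(S_n(x)))_n$ is $H$-summable to $\psi(S(x))$: for every $\psi$, the scalar series $\sum_n h_n(r)\psi(S_n(x))$ converges for each $r$ and tends to $\psi(S(x))$ as $r\to R^-$.

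Next I would establish that the $H$-means converge in the norm of $Y$ for every $r$ and every $x$. Fix $r$ and set $U_r^{(N)}(x):=\sum_{n=0}^{N}h_n(r)S_n(x)$, a bounded operator $X\to Y$. By the previous step $\psi(U_r^{(N)}(x))$ converges as $N\to\infty$ for every $\psi$, so $(U_r^{(N)}(x))_N$ is weakly Cauchy, hence norm-bounded by the uniform boundedness principle; a second application to the operators $U_r^{(N)}$ yields $\sup_N\|U_r^{(N)}\|<\infty$. Consequently the set of $x$ for which $U_r^{(N)}(x)$ converges in norm is a closed subspace of $X$. For $x\in W$ the sequence $(S_n(x))_n$ converges to $S(x)$, so by regularity of $H$ it is $H$-summable and the series converges; thus this closed subspace contains the dense set $W$ and therefore equals $X$. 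This defines a bounded operator $U_r(x):=\sum_n h_n(r)S_n(x)$, and from Step~2 we have $\psi(U_r(x))\to\psi(S(x))$ as $r\to R^-$ for every $\psi$. To pass to norm convergence, let $r_j\to R^-$ be any sequence; for each $x,\psi$ the scalars $\psi(U_{r_j}(x))$ converge, so two applications of the uniform boundedness principle give $M:=\sup_j\|U_{r_j}\|<\infty$. Given $\varepsilon>0$, pick $w\in W$ with $\|x-w\|<\varepsilon$; then
\[
\|U_{r_j}(x)-S(x)\|\le \|U_{r_j}\|\,\|x-w\|+\|U_{r_j}(w)-S(w)\|+\|S\|\,\|w-x\|,
\]
where the middle term tends to $0$ because $(S_n(w))_n$ converges and $H$ is regular. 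Hence $\limsup_j\|U_{r_j}(x)-S(x)\|\le (M+\|S\|)\varepsilon$, so $U_{r_j}(x)\to S(x)$; as $r_j\to R^-$ was arbitrary, $U_r(x)\to S(x)$ as $r\to R^-$, which is exactly the $H$-summability of $(S_n(x))_n$ to $S(x)$.

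The main obstacle is the norm convergence of the $H$-series for fixed $r$ (Step~3): the scalar inclusion only delivers convergence of $\sum_n h_n(r)\psi(S_n(x))$ for each functional $\psi$, i.e. weak-Cauchyness of the partial sums, and in a general Banach space a weak-Cauchy sequence need not converge. The device that circumvents this difficulty---uniform boundedness of the partial-sum operators $U_r^{(N)}$ combined with norm convergence on the dense set $W$ (supplied by regularity of $H$)---is precisely what makes the argument run without imposing reflexivity or weak sequential completeness on $Y$.
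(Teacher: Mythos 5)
Your proposal is correct and follows essentially the same route as the paper's proof: reduce to the scalar inclusion via functionals $\psi\in Y^{*}$, apply Banach--Steinhaus twice to bound the partial-sum operators $\sum_{n=0}^{N}h_n(r)S_n$ (and later the operators $S^H_{r_j}$), and transfer norm convergence from the dense set $W$ using the regularity of $H$. The only cosmetic difference is that in step (i) you invoke the ``set of convergence of a uniformly bounded operator sequence is a closed subspace'' lemma, whereas the paper writes out the equivalent Cauchy tail estimate explicitly.
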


	\begin{proof}
	Let $(k_n)_{n\ge0},(h_n)_{n\ge0}:[0,R)\to\bC$  be the functions defining the summability methods $K$ and $H$ respectively. We need to prove that, for each $x\in X$:
	\begin{enumerate}[(i)]
	\item $\sum_{n\ge0}h_n(r)S_n(x)$ converges in $Y$ for all  $r\in[0,R)$;
	\item $\|\sum_{n\ge0}h_n(r)S_n(x)-S(x)\|_Y\to0$ as $r\to R^-$.
	\end{enumerate}

	We begin with (i). Fix $r\in[0,R)$.	Given $x\in X$, the sequence $(S_n(x))_{n\ge0}$ is $K$-summable to $S(x)$. By linearity and continuity, for each $\phi\in Y^*$, the sequence $(\phi(S_n(x)))$ is $K$-summable to $\phi(S(x))$. As $K$ is scalar-included in $H$, it follows that $(\phi(S_n(x)))$ is also $H$-summable to $\phi(S(x))$. In particular, the series $\sum_{n\ge0} h_n(r)\phi(S_n(x))$ converges in $\bC$. Hence
	\[
	\sup_{m\ge0}\Bigl|\sum_{n=0}^{m}h_n(r)\phi(S_n(x))\Bigr|<\infty 
	\quad(x\in X,~\phi\in Y^*).
	\]
	In other words
	\[
	\sup_{m\ge0}\Bigl|\phi\Bigl(\sum_{n=0}^{m}h_n(r)S_n(x)\Bigr)\Bigr|<\infty
	\quad(x\in X,~\phi\in Y^*).
	\]
	Applying the Banach--Steinhaus theorem twice (once for $\phi$ and once for $x$), we obtain that
	\begin{equation}\label{E:bound}
	M:=\sup_{m\ge0}\Bigl\|\sum_{n=0}^{m}h_n(r)S_n\Bigr\|<\infty,
	\end{equation}
	where now the norm is the operator norm.

	Given $x\in X$ and $\epsilon>0$, choose $w\in W$ such that  $\|x-w\|_X<\epsilon/M$. As $S_n(w)\to S(w)$ and $H$ is a regular summability method, the sequence $(S_n(w))$ is $H$-summable to $S(w)$. In particular, the series $\sum_{n\ge0}h_n(r)S_n(w)$ converges in $Y$. It follows that  $\|\sum_{n=m_1}^{m_2}h_n(r)S_n(w)\|_Y<\epsilon$ for  all large enough $m_1,m_2$. For all such $m_1,m_2$, we then have
	\begin{align*}
	\Bigl\|\sum_{n=m_1}^{m_2}h_n(r)S_n(x)\Bigr\|_Y
	&\le \Bigl\|\sum_{n=m_1}^{m_2}h_n(r)S_n(x-w)\Bigr\|_Y+\Bigl\|\sum_{n=m_1}^{m_2}h_n(r)S_n(w)\Bigr\|_Y\\
	&\le \Bigl\|\sum_{n=m_1}^{m_2}h_n(r)S_n\Bigr\|\|x-w\|_X+\epsilon\\
	&\le 2M(\epsilon/M)+\epsilon= 3\epsilon.
	\end{align*}
	This shows that the series $\sum_{n=0}^\infty h_n(r)S_n(x)$ is Cauchy, and therefore it converges in $Y$, thereby completing the proof of (i).

Now we turn to (ii). For each $r\in[0,R)$, define an operator $S^H_r:X\to Y$ by
	\[
	S^H_r(x):=\sum_{n\ge0}h_n(r) S_n(x) \quad(x\in X).
	\]
	By (i) the series converges, so $S^H_r$ is well-defined and linear. Furthermore, it follows easily from \eqref{E:bound} that $S^H_r$ is a bounded linear operator from $X$ into $Y$. As we saw in (i), for each $x\in X$ and $\phi\in Y^*$, the sequence $\phi(S_n(x))$ is $H$-summable to $\phi(S(x))$,
	in other words
	\begin{equation}
	\phi(S^H_r(x))\to\phi(S(x)) \quad(r\to R^-). \label{E:WeakConvergence}
	\end{equation}

	We want to prove that $S_r^H (x) \to S(x)$ as $r \to R^-$. To do so, let $(r_j)_{j \geq 0}$ be a sequence in $[0, R)$ such that $r_j \to R^-$ ($j \to \infty$). We will prove that $S_{r_j}^H (x) \ra S(x)$ as $j \to \infty$. By \eqref{E:WeakConvergence}, we have $\phi (S^H_{r_j} (x) )\to \phi (S(x))$ as $j \to \infty$ for each $x \in X$ and $\phi \in Y^*$. By the Banach--Steinhaus theorem, again applied twice, it follows that
	\[
	N:=\sup_{j \geq 0}\|S^H_{r_j}\|<\infty.
	\]

	Given $x\in X$ and $\epsilon>0$, choose $w\in W$ such that 
	\[
	\|x-w\|_X<\epsilon/\max\{N,\|S\|\} .
	\]
	By regularity of $H$, we have
	\[
	S^H_{r_j}(w)\to S(w) \quad (j \to \infty).
	\]
	Hence $\|S^H_{r_j}(w)-S(w)\|_Y<\epsilon$ for all $j$ sufficiently large. For all such $j$, we then have
	\begin{align*}
	\|S^H_{r_j}(x)-S(x)\|_Y
	&\le \|S^H_{r_j}(x-w)\|_Y+\|S^H_{r_j}(w)-S(w)\|_Y+\|S(w-x)\|_Y\\
	&\le N(\epsilon/N)+\epsilon+\|S\|(\epsilon/\|S\|)=3\epsilon.
	\end{align*}
	We conclude that $S^H_{r_j}(x)\to S(x)$ as $j \ra \infty$, completing the proof of (ii).
	\end{proof}
	
\section{Consequences for other power-series methods}\label{SS:DivergenceOfPowerSeriesMeans}
Our final goal is to prove Corollary \ref{C:Mainresult2AbelDivergence}. Recall from \S\ref{S:SummMEthodBanachSpaces} that the generalized Abel means of order $\alpha > -1$ applied to a function $f \in \hol (\bD )$ are
		\begin{align*}
		A^{\alpha}_r[f](z) := (1 - r)^{1 + \alpha} \sum_{n \geq 0} \binom{n + \alpha}{\alpha} s_n (z) r^n \quad (z \in \bD , \, r \in [0, 1) )\text{.}
		\end{align*}
	Let $f \in \cH (b)$ and let $r \in [0, 1)$. 
	By Corollary~\ref{C:R^n},
	 for any $R > 1$, we have $\Vert s_n[f] \Vert_b \leq C R^n$ for some constant $C$ depending only on $R$ and $f$. From this, it is easy to see that the series defining $A^{\alpha}_r[f]$ converges absolutely to some function $g^{(r)} \in \cH (b)$ and, since convergence in $\cH (b)$ implies pointwise convergence, we get $A^{\alpha}_r [f] = g^{(r)} \in \cH (b)$. Thus, we may view the generalized Abel mean as a linear operator on $\mathcal{H} (b)$ defined by
		\begin{align*}
		A^{\alpha}_r (f) := A^{\alpha}_r [f]  \quad (f \in \mathcal{H} (b), \, r \in [0, 1) ).
		\end{align*}
	By the closed-graph theorem, it is a bounded linear operator on $\cH (b)$. 
	
	To use Theorem \ref{T:scalarInclusionThenOperatorBounded}, we need the following relation between the generalized Abel methods and the logarithmic method. This result can be found in \cite[\S 5]{Borwein1957b}.
	\begin{theorem}\label{T:FactsAboutAalpha}
	All generalized Abel methods of order $\alpha$, with $\alpha > -1$, are scalar-included in the logarithmic method.
	\end{theorem}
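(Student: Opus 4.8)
The plan is to realize the logarithmic transform of a scalar sequence as a regular \emph{integral} average of its generalized Abel transform, thereby reducing the inclusion to a continuous analogue of the Silverman--Toeplitz regularity conditions of Theorem~\ref{T:conditionForRegularity}. Fix $\alpha>-1$ and a scalar sequence $s=(s_n)_{n\ge0}\subset\bC$ that is $A^\alpha$-summable; since both methods are regular we may assume the limit is $0$, and we write $\sigma(x):=A^\alpha_x[s]=(1-x)^{1+\alpha}\sum_{n\ge0}\binom{n+\alpha}{\alpha}s_nx^n$, so that $\sigma(x)\to0$ as $x\to1^-$. The goal is to produce kernels $K_r$ on $[0,1)$ with $L_r[s]=\int_0^1K_r(x)\,\sigma(x)\,dx$ and then to verify that $\int_0^1|K_r(x)|\,dx$ stays bounded, that $\int_0^1K_r(x)\,dx\to1$, and that the mass of $K_r$ escapes to $x=1$ as $r\to1^-$; any such family automatically transfers the limit $0$ from $\sigma$ to $L_r[s]$.

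First I would determine $K_r$ by matching moments: comparing the defining series of $L_r[s]$ with $\int_0^1K_r(x)\sigma(x)\,dx$ term by term forces $G_r(x):=K_r(x)(1-x)^{1+\alpha}$ to have prescribed moments
\[
\int_0^1 G_r(x)\,x^n\,dx=\frac{r}{\log\frac{1}{1-r}}\cdot\frac{r^n}{(n+1)\binom{n+\alpha}{\alpha}}\qquad(n\ge0).
\]
For $\alpha\ge0$ the sequence $r^n/\big((n+1)\binom{n+\alpha}{\alpha}\big)$ is a Hausdorff moment sequence: combining $\tfrac1{n+1}=\int_0^1u^n\,du$ with the Beta identity $\binom{n+\alpha}{\alpha}^{-1}=\alpha\int_0^1t^n(1-t)^{\alpha-1}\,dt$, it is the moment sequence of a positive measure $\nu_r$ supported in $[0,r]$ (the law of $r$ times a product of two independent $[0,1]$-valued random variables). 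Hence $K_r\,dx=\frac{r}{\log\frac{1}{1-r}}\,(1-x)^{-(1+\alpha)}\,d\nu_r(x)\ge0$. Feeding in the constant sequence $s_n\equiv1$, for which $\sigma\equiv1$ and $L_r[s]=1$, shows $\int_0^1K_r\,dx=1$ exactly, so the only remaining point is the mass-escape condition, which follows from the explicit scaling by $r\to1^-$. In the special case $\alpha=0$ this recovers precisely the integral formula \eqref{E:IntegralFormulaLogAndAbel}. For $\alpha\ge0$ one may in fact bypass the kernel entirely, since $A^\alpha\subseteq A^0$ and $A^0\subseteq L$ by the $\alpha=0$ case.

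The main obstacle is the range $-1<\alpha<0$, and it is genuine. There $\binom{n+\alpha}{\alpha}^{-1}=\Gamma(\alpha+1)\,\Gamma(n+1)/\Gamma(n+\alpha+1)\sim\Gamma(\alpha+1)\,n^{-\alpha}$ is \emph{unbounded}, so the prescribed moments cannot arise from any finite positive measure and $G_r$ is necessarily a signed (indeed distributional) object. Moreover these methods are strictly stronger than the classical Abel method --- one has $A^0\subseteq A^\alpha$, not the reverse --- so there is no reduction to the case $\alpha\ge0$. The positivity that made the Silverman--Toeplitz check transparent is therefore lost, and instead one must estimate the total variation $\int_0^1|K_r(x)|\,dx$ of the signed kernel directly, showing it stays bounded while the mass still concentrates at $x=1$. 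This delicate $L^1$-estimate for a non-positive kernel is exactly the step I expect to cost the most work, and it is the content of Borwein's general comparison theorem for power-series methods in \cite[\S5]{Borwein1957b}.
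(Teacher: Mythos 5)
The first thing to say is that the paper does not prove this theorem at all: it is quoted verbatim from Borwein, with the one-line justification ``This result can be found in \cite[\S 5]{Borwein1957b}.'' So there is no internal proof to match your argument against, and any genuine proof you supply is necessarily a different route.

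For $\alpha\ge 0$ your sketch is correct and is a nice self-contained argument. The kernel representation is the right generalization of the paper's integral formula \eqref{E:IntegralFormulaLogAndAbel} (which is exactly your $\alpha=0$ case), the moment computation via $\tfrac1{n+1}=\int_0^1u^n\,du$ and the Beta identity does produce a positive measure $\nu_r$ supported in $[0,r]$ with total mass $1$, the normalization $\int_0^1K_r\,dx=1$ follows from testing on the constant sequence, and the mass-escape condition $\int_0^{x_0}K_r\,dx\le \frac{r}{\log\frac1{1-r}}(1-x_0)^{-(1+\alpha)}\to0$ is immediate from the support and total mass of $\nu_r$. Together with the (easy) observation that $A^\alpha$-summability forces $s_n=O(\rho^n)$ for every $\rho>1$, so that both transforms and the Fubini interchange are legitimate, this gives a complete proof of the inclusion for $\alpha\ge0$ by the continuous analogue of Theorem~\ref{T:conditionForRegularity}. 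You also correctly orient the monotonicity of the family: by \cite[Theorem 2]{Borwein1957a} one has $A^\beta\subseteq A^\alpha$ for $\alpha<\beta$, so $A^\alpha\subseteq A^0\subseteq L$ handles $\alpha\ge0$, while for $-1<\alpha<0$ the method is strictly stronger than $A^0$ and no such reduction is available.

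The gap is exactly where you say it is, and it is a real one: for $-1<\alpha<0$ your proposal does not prove the theorem, it only explains why the positive-kernel argument fails and then defers to Borwein. Concretely, the natural signed representation $\frac{1}{(n+1)\binom{n+\alpha}{\alpha}}\propto\int_0^1t^n(1-t)^\alpha\,dt\cdot\bigl(1-\tfrac{|\alpha|}{n+1}\bigr)$ does yield a signed measure $\mu_r$ on $[0,r]$ of uniformly bounded total variation, but the crude bound $\int_0^1|K_r|\,dx\le\frac{r}{\log\frac1{1-r}}(1-r)^{-(1+\alpha)}\|\mu_r\|$ blows up as $r\to1^-$; one must show that $|\mu_r|$ puts little enough mass near $x=r$ to absorb the factor $(1-x)^{-(1+\alpha)}$, and that estimate is precisely the content of Borwein's comparison theorem (restated in the paper as Theorem~\ref{T:BorweinEquivalentPowermethods}, whose hypotheses (A) and (B) encode the required control on the signed representing measure). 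So the honest summary is: your proposal genuinely proves half the range by a method the paper does not present, and for the other half it rests on the same external citation as the paper does. If you want a proof of the full statement within the framework of this paper, the cleanest path is to verify conditions (A) and (B) of Theorem~\ref{T:BorweinEquivalentPowermethods} for $p_n=\binom{n+\alpha}{\alpha}$ and $q_n=\frac1{n+1}$ directly, which is where the $\delta$-condition (B) replaces your missing $L^1$ estimate.
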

	
	We are now ready to prove Corollary \ref{C:Mainresult2AbelDivergence}.
	\begin{proof}[Proof of Corollary \ref{C:Mainresult2AbelDivergence}]
		Let $b$ be as in Theorem \ref{T:MainresultLogDivergence}. Suppose, if possible, that, for every $f \in \cH (b)$,
		$$
		A_r^\alpha (f) \ra f , \quad (r \ra 1^-).
		$$
		
	To apply Theorem \ref{T:scalarInclusionThenOperatorBounded}, let $X = Y = \cH (b)$, let $W$ be the set of polynomials, let $R = 1$, let $K$ and $H$ be the generalized Abel method of order $\alpha > -1$ and the logarithmic method respectively, let $S_n := s_n$, the operator that maps each function to the $n$-th partial sum of its Taylor expansion, and let $S := I$ be the identity on $\cH (b)$. 
	
	By Theorem \ref{T:FactsAboutAalpha}, the Abel method of order $\alpha > -1$ is scalar-included in the logarithmic method. The logarithmic method is also a regular sequence-to-function summability method because the function $\frac{1}{r}\log (\tfrac{1}{1-r}) \ra \infty$ as $r \ra 1^-$. Moreover, by Theorem \ref{T:densitypolynomialsAndNOnextreme}, the set of polynomials $W$ is dense in $\cH (b)$ since $b$ is non-extreme.
	
	It remains to verify the first condition of Theorem \ref{T:scalarInclusionThenOperatorBounded}, that is, $s_n (p) \ra p$ for any $p \in W$. This is clear since, for $p \in W$, with 
	$$
	p(z) = \sum_{n = 0}^N a_n z^n \quad (z \in \bD ),
	$$
we have $s_n (p) = s_n [p] = p$ if $n \geq N$. 
		
	Therefore, by Theorem \ref{T:scalarInclusionThenOperatorBounded}, we infer that $(s_n (f))_{n \geq 0}$ is summable by the logarithmic method for every $f \in \cH (b)$. This contradicts Theorem \ref{T:MainresultLogDivergence}. Therefore, there exists a function $f \in \cH (b)$ such that $A_r^\alpha (f) \not\ra f$ as $r \ra 1^-$.
		\end{proof}
		
		In fact, Corollary \ref{C:Mainresult2AbelDivergence} generalizes to a whole family of power series methods. This generalization is a consequence of the following inclusion theorem due to Borwein.
		
		\begin{theorem}\cite[Theorem A]{Borwein1957b}\label{T:BorweinEquivalentPowermethods}
		Let $p (x) = \sum_{n \geq 0} p_n x^n$ be a power series with a radius of convergence $R_p > 0$. Let $q(x) = \sum_{n \geq 0} q_nx^n$ be another power series with a radius of convergence $R_q = R_p$. Suppose that there exist an integer $N$, a finite signed measure $\mu$ on the interval $[0, 1]$ and a number $\delta \in (0, 1]$ such that, for all $n \geq N$,
			\begin{enumerate}
			\item $p_n = q_n \int_0^1 t^n \, d\mu (t)$;
			\item $\int_0^1 t^n \, d \mu (t) \geq \delta \int_0^1 t^n \, |d\mu (t)|$.
			\end{enumerate}
		Then the power-series method $(q)$ is scalar-included in the power-series method $(p)$.
		\end{theorem}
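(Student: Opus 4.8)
The plan is to prove the inclusion directly from the definition: assuming a scalar sequence $(s_n)$ is $(q)$-summable to some limit $L$, I must show that $P_r[s]=p(r)^{-1}\sum_{n\ge0}p_n s_n r^n\to L$ as $r\to R_p^-$. First I would reduce to the case $L=0$. Every power-series method sums the constant sequence $(L)_n$ exactly, since $P_r[(L)_n]=p(r)^{-1}\sum_n p_n L r^n=L$ for every $r$; hence by linearity $(s_n-L)$ is $(q)$-summable to $0$, and it suffices to treat that case. I would also record that the series $\sum_n q_n s_n x^n$ has radius of convergence at least $R_q$, because $(q)$-summability forces it to converge on $[0,R_q)$; consequently it converges absolutely for $|x|<R_q=R_p$, a fact I will use repeatedly.

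Write $Q_\rho[s]=q(\rho)^{-1}\sum_n q_n s_n\rho^n$, so that $Q_\rho[s]=\epsilon(\rho)$ with $\epsilon(\rho)\to0$ as $\rho\to R_q^-$, and $\epsilon$ bounded on $[0,R_q)$. The heart of the argument is an integral representation obtained from hypothesis~(1). For $n\ge N$ I substitute $p_n=q_n\int_0^1 t^n\,d\mu(t)$ into $\sum_{n\ge N}p_n s_n r^n$ and interchange summation and integration. The interchange is justified by Tonelli/Fubini, since $\sum_{n\ge N}|q_n s_n|r^n\int_0^1 t^n\,|d\mu|(t)\le\|\mu\|\sum_{n\ge N}|q_n s_n|r^n<\infty$ for $r<R_p$, using absolute convergence inside the radius together with $\int_0^1 t^n|d\mu|(t)\le\|\mu\|$. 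Recognising $\sum_{n\ge N}q_n s_n(rt)^n=q(rt)\epsilon(rt)$ up to a polynomial of degree $<N$, this yields
\[
\sum_{n\ge N}p_n s_n r^n=\int_0^1 q(rt)\,\epsilon(rt)\,d\mu(t)+B(r),
\]
where $B(r)$ collects finitely many terms and integrals of a fixed polynomial against $\mu$, and is therefore bounded as $r\to R_p^-$. The same computation with $s_n\equiv1$ shows $m(r):=\int_0^1 q(rt)\,d\mu(t)=p(r)+O(1)$.

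With this in hand, the final step is a Toeplitz-style estimate. Given $\eta>0$, choose $\rho_0<R_q$ with $|\epsilon(\rho)|<\eta$ for $\rho_0<\rho<R_q$, and split $\int_0^1$ at $t=\rho_0/r$. On $\{t>\rho_0/r\}$ the factor $|\epsilon(rt)|<\eta$, so this part is at most $\eta\int_0^1 q(rt)\,|d\mu|(t)$; on $\{t\le\rho_0/r\}$ the increasing function $q$ satisfies $q(rt)\le q(\rho_0)$, so this part is bounded by a constant independent of $r$. Hypothesis~(2) is exactly what controls the total-variation mass: since $q$ has non-negative coefficients, $\int_0^1 q(rt)\,|d\mu|(t)=\sum_n q_n r^n\int_0^1 t^n|d\mu|\le\delta^{-1}\sum_n q_n r^n\int_0^1 t^n\,d\mu=\delta^{-1}m(r)$. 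Dividing the representation of $\sum_{n\ge N}p_n s_n r^n$ (together with the omitted terms $n<N$) by $p(r)$ and using that $p(r)\to\infty$ as $r\to R_p^-$ (regularity of the target method, so that $B(r)/p(r)\to0$ and $m(r)/p(r)\to1$), I obtain $\limsup_{r\to R_p^-}|P_r[s]|\le\eta/\delta$. As $\eta$ is arbitrary, $P_r[s]\to0$, which is the desired conclusion.

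The main obstacle is that $\mu$ is a signed measure, so the kernel $q(rt)\,d\mu(t)/m(r)$ against which the averages $\epsilon(rt)$ are integrated is \emph{not} a positive approximate identity; without further control the cancellation in a signed measure could let the total-variation mass dwarf the signed mass and destroy the convergence. Hypothesis~(2), through the parameter $\delta$, is precisely the quantitative bound $\int_0^1 q(rt)|d\mu|\le\delta^{-1}\int_0^1 q(rt)\,d\mu$ that keeps the two comparable and rescues the Toeplitz argument; verifying this inequality and the accompanying Fubini interchange are the only genuinely delicate points, the remainder being the bookkeeping of the finitely many low-order terms and the normalisation by $p(r)\to\infty$, which relies on the target method $(p)$ being regular.
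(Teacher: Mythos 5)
The paper does not prove this statement; it is quoted verbatim from Borwein and used as a black box, so there is no internal proof to compare against. Your argument is, in substance, Borwein's original one: reduce to limit $0$, use hypothesis (1) plus Fubini to write $\sum_{n\ge N}p_ns_nr^n=\int_0^1 q(rt)\,\epsilon(rt)\,d\mu(t)+O(1)$ where $\epsilon(\rho)=Q_\rho[s]\to0$, and then run a Toeplitz-kernel estimate in which hypothesis (2) supplies the bound $\int_0^1 q(rt)\,|d\mu|(t)\le\delta^{-1}p(r)+O(1)$ that tames the signed measure. The structure is correct and the delicate points (absolute convergence of $\sum q_ns_nx^n$ inside the radius, the Fubini interchange, the splitting at $t=\rho_0/r$ using monotonicity of $q$) are all handled properly.

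Two small caveats. First, you invoke ``regularity of the target method'' to get $p(r)\to\infty$ and hence $B(r)/p(r)\to0$ and $m(r)/p(r)\to1$; the statement as transcribed does not actually include this hypothesis, but it is part of Borwein's standing assumptions on power-series methods (and holds in every application in this paper), so you should state it explicitly as an assumption rather than derive it. Relatedly, your claim that $B(r)$ is \emph{bounded} uses $R_p<\infty$; for $R_p=\infty$ one needs instead that $p$ dominates polynomials, which again follows from the standing assumptions in the nondegenerate case. Second, the displayed inequality $\int_0^1 q(rt)\,|d\mu|(t)\le\delta^{-1}m(r)$ applies hypothesis (2) to all $n$, whereas it is only assumed for $n\ge N$; the correct statement carries an extra $O(1)$ from the finitely many low-order terms, which you acknowledge elsewhere and which does not affect the conclusion after division by $p(r)$. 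Neither point is a genuine gap.
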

		
		Consider the power-series method defined by the power series $p(r) := \sum_{n \geq 0} p_n r^n$ with a radius of convergence $R_p = 1$. We suppose that, for the coefficients $(p_n)_{n \geq 0}$, there exist an integer $N$, a finite signed measure $\mu$ on $[0,1]$ and a number $\delta \in (0, 1]$ such that, for every $n \geq N$,
			\begin{enumerate}
			\item[(A)] $\frac{1}{n + 1} = p_n \int_0^1 t^n \, d\mu (t)$;
			\item[(B)] $\frac{1}{n + 1} \geq \delta p_n \int_0^1 t^n \, |d \mu (t)|$.
			\end{enumerate}
		As an explicit example, the coefficients $p_n := \binom{n + \alpha}{\alpha}$ with $\alpha > -1$ satisfy these requirements (see \cite{Borwein1957b} for details).
		
		Recall that, applied to the partial sums of the Taylor expansion of $f (z) = \sum_{n \geq 0} a_n z^n$, the means defined by the power-series method $(p)$ are
			\begin{align*}
			P_r[f]:= \frac{1}{p(r)} \sum_{n \geq 0} p_n s_n[f] r^n \quad ( 0 \leq r < 1 ).
			\end{align*}
		Using formula \eqref{E:FormulaForHbNormHolNeighborhood} again, we can prove that the series defining $P_r[f]$ converges, in $\cH (b)$, to some function $g^{(r)} \in \cH (b)$ and that $P_r[f] = g^{(r)} \in \cH (b)$. Also, by the closed-graph theorem, the linear operator $P_r : \mathcal{H} (b) \ra \mathcal{H} (b)$, defined by $P_r(f) := P_r[f]$, is bounded for each $r \in [0, 1)$.
		\begin{theorem}
		Let $p (r) := \sum_{n \geq 0} p_n r^n$ be a power series with a radius of convergence $R_p = 1$. Suppose that the coefficients $(p_n)_{n \geq 0}$ satisfy the conditions (A) and (B) above, with $N \geq 0$ an integer, $\mu$ a finite signed measure on $[0, 1]$ and $\delta \in (0, 1]$. Then there exist a non-extreme point $b \in H^\infty$ and a function $f \in \cH (b)$ such that $P_r [f] \not\ra f$ in $\cH (b)$ as $r \ra 1^-$.
		\end{theorem}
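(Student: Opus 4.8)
The plan is to follow the same strategy as in the proof of Corollary~\ref{C:Mainresult2AbelDivergence}, with the generalized Abel method replaced by our power-series method $(p)$ and with Theorem~\ref{T:BorweinEquivalentPowermethods} supplying the scalar-inclusion. The decisive observation is that conditions (A) and (B) are exactly the hypotheses of Theorem~\ref{T:BorweinEquivalentPowermethods} once the two power series are identified correctly: we take Borwein's ``$(q)$'' to be our method $(p)$, with coefficients $p_n$, and Borwein's ``$(p)$'' to be the logarithmic method, whose coefficients are $1/(n+1)$. Both series have radius of convergence $1$, so the requirement $R_q = R_p$ is met. Condition~(A) is then precisely Borwein's relation $\frac{1}{n+1} = p_n \int_0^1 t^n\,d\mu(t)$, and, after dividing by $p_n > 0$ and using (A), condition~(B) is precisely $\int_0^1 t^n\,d\mu(t) \ge \delta \int_0^1 t^n\,|d\mu(t)|$ for $n \ge N$. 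Hence Theorem~\ref{T:BorweinEquivalentPowermethods} yields that the power-series method $(p)$ is scalar-included in the logarithmic method.

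Next I would check that $(p)$ is itself a regular summability method, which is needed in order to invoke Theorem~\ref{T:scalarInclusionThenOperatorBounded}. Since $p_n \ge 0$ and $R_p = 1$, the method is regular exactly when $p(r) \to \infty$ as $r \to 1^-$, that is, when $\sum_n p_n = \infty$. This follows from (A): for $t \in [0,1]$ one has $0 < \int_0^1 t^n\,d\mu(t) \le \|\mu\|$ (the total variation of $\mu$), the positivity coming from the fact that $p_n\int_0^1 t^n\,d\mu(t)=\frac{1}{n+1}\neq0$, so that $p_n = \bigl((n+1)\int_0^1 t^n\,d\mu(t)\bigr)^{-1} \ge \bigl((n+1)\|\mu\|\bigr)^{-1}$ for $n \ge N$, whence $\sum_n p_n = \infty$. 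The logarithmic method is regular because $l(r) \to \infty$ as $r \to 1^-$.

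With these two facts in hand, the argument becomes identical to that of Corollary~\ref{C:Mainresult2AbelDivergence}. I would choose $b$ and $f$ as in Theorem~\ref{T:MainresultLogDivergence} and argue by contradiction, assuming that $P_r[f] \to f$ in $\cH(b)$ for every $f \in \cH(b)$. I would then apply Theorem~\ref{T:scalarInclusionThenOperatorBounded} with $X = Y = \cH(b)$, with $W$ the (dense) set of polynomials, with $K = (p)$ and $H$ the logarithmic method, with $S_n = s_n$ the $n$-th Taylor partial sum operator, and with $S = I$. The three hypotheses hold: $s_n(p) = p$ for every polynomial $p$ and all large $n$; the contradiction hypothesis says precisely that $(s_n(f))$ is $K$-summable to $f$ for every $f$; and $K$ is scalar-included in $H$ by the first paragraph. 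The conclusion of Theorem~\ref{T:scalarInclusionThenOperatorBounded} is then that $(s_n(f))$ is summable by the logarithmic method to $f$ for every $f \in \cH(b)$; in particular $L_r[f] \to f$ in $\cH(b)$, so $\|L_r[f]\|_b$ would remain bounded, contradicting Theorem~\ref{T:MainresultLogDivergence}. This forces the existence of some $f \in \cH(b)$ with $P_r[f] \not\to f$.

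I expect no serious obstacle, since the proof is a direct generalization of Corollary~\ref{C:Mainresult2AbelDivergence}. The only points requiring care are getting the direction of the scalar-inclusion right in the identification with Theorem~\ref{T:BorweinEquivalentPowermethods} and the short verification that $(p)$ is regular via the lower bound $p_n \ge \bigl((n+1)\|\mu\|\bigr)^{-1}$. The facts that $P_r[f] \in \cH(b)$ and that each $P_r$ is a bounded operator on $\cH(b)$ have already been established in the text preceding the statement.
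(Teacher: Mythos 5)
Your proposal is correct and follows exactly the route the paper takes: the paper's own proof is a one-line reduction to Corollary~\ref{C:Mainresult2AbelDivergence} with Theorem~\ref{T:BorweinEquivalentPowermethods} replacing Theorem~\ref{T:FactsAboutAalpha}, and you have simply spelled out the details it leaves implicit. Your identification of the two power series in Borwein's theorem is the right way round, and your verification that $(p)$ is regular (via $p_n \ge ((n+1)\|\mu\|)^{-1}$) is a correct check of a hypothesis of Theorem~\ref{T:scalarInclusionThenOperatorBounded} that the paper does not bother to record.
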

		\begin{proof}
		The proof is similar to Corollary \ref{C:Mainresult2AbelDivergence}. The only difference is that we use Theorem \ref{T:BorweinEquivalentPowermethods} instead of Theorem \ref{T:FactsAboutAalpha} to show that the power-series method defined by $(p)$ is scalar-included in the logarithmic method. 
		\end{proof}
		
		There are plenty of other summability methods which are included in the logarithmic method that we did not treat here. For example, in \cite{BorweinWatson1981},  it was shown that the generalized Borel method $B^{\alpha, \beta}$, with its means defined by
	$$
	B_x^{\alpha, \beta} [f] := \sum_{n \geq N} \frac{s_n[f]}{\Gamma (\alpha n + \beta )} x^{\alpha n + \beta - 1},
	$$
where $x > 0$, $\alpha > 0$, $\beta \in \bR$ and $N$ is a non-negative integer such that $\alpha N + \beta > 1$, is scalar-included in the logarithmic method. With similar techniques, we can prove that $B^{\alpha, \beta}_x [f] \in \cH (b)$ for each $f \in \cH (b)$ and, by using Theorem \ref{T:scalarInclusionThenOperatorBounded}, we can prove that there exist a non-extreme $b$ and a function $f \in \cH (b)$ such that $B^{\alpha, \beta}_x [f] \not \ra f$ in $\cH (b)$ as $x \ra \infty$.

\section{An open problem}
We have studied some classes of sequence-to-function summability methods in this article and we proved that none of them works in general for de Branges--Rovnyak spaces. Moreover, in \cite{Mashreghi2020Failure}, the authors showed that there exist a Hilbert space of analytic functions $\cH$ continuously embedded in $\hol (\bD )$ and a function $f \in \cH$ such that $f$ lies outside the closed linear span of $\oa s_n[f] \, : \, n \geq 0 \fa$. In particular, no summability methods applied to the partial sums $s_n[f]$ would work to make the Taylor series summable to $f$ in $\cH$. This motivates the following problem.
		\begin{problem}\label{P:worstHb}
		Does there exist a non-extreme function $b$ and a function $f \in \mathcal{H} (b)$ such that $f$ lies outside the closed linear span of $\oa s_n[f] \, : \, n \geq 0 \fa$?
		\end{problem}
		
		\section*{Acknowledgement} We are grateful to the anonymous referee for reading the article carefully and for making several helpful suggestions to improve the exposition.

\bibliographystyle{plain}
\bibliography{biblio}

\end{document}